\newtheorem{theorem}{Theorem}
\newtheorem{prop}{Proposition}
\newtheorem{lemma}{Lemma}
\newtheorem{false statement}{False statement}
\theoremstyle{definition}
\newtheorem{claim}{Claim}
\newtheorem{conjecture}{Conjecture}
\newtheorem{corollary}[claim]{Corollary}
\renewcommand{\theenumi}{\rm (\roman{enumi})}
\newcommand {\relabel}[1] {\label{#1} \red{[*: #1]}}
\newcommand {\rebibitem}[1] {\bibitem{#1} \red{[*: #1]}} 
\def\relabel {\label} \def\rebibitem {\bibitem}  
\begin{document}

\title
{Proving a conjecture on chromatic polynomials
by counting the number of acyclic orientations\thanks{This article is partially supported by
NTU AcRf Project (RP 3/16 DFM) of Singapore and
NSFC grants (No. 11701401, 11961070 and 11971346).}}
\date{}

\def \bg {\hspace{0.3 cm}}

\author{Fengming Dong\thanks{Corresponding author. Email: fengming.dong@nie.edu.sg and donggraph@163.com. },\bg Jun Ge,\bg Helin Gong
\\
Bo Ning,\bg Zhangdong Ouyang\bg and\bg Eng Guan Tay}

\maketitle

\begin{abstract}
The chromatic polynomial $P(G,x)$ of a graph $G$ of order $n$ can be expressed as $\sum\limits_{i=1}^n(-1)^{n-i}a_{i}x^i$, where
$a_i$ is interpreted as the number of broken-cycle free spanning subgraphs of $G$ with exactly $i$ components. The parameter
$\epsilon(G)=\sum\limits_{i=1}^n (n-i)a_i/\sum\limits_{i=1}^n a_i$
is the mean size of a broken-cycle-free spanning subgraph of $G$.
In this article, we confirm and strengthen a conjecture proposed by Lundow and Markstr\"{o}m in 2006
that $\epsilon(T_n)< \epsilon(G)<\epsilon(K_n)$ holds for any
connected graph $G$ of order $n$ which is neither the complete graph $K_n$ nor a tree $T_n$ of order $n$.
The most crucial step of our proof is to obtain the
interpretation of all $a_i$'s by the number of acyclic
orientations of $G$. 
\end{abstract}

\medskip

\noindent {\bf Keywords:} chromatic polynomial;
graph; acyclic orientation; combinatorial interpretation

\smallskip
\noindent {\bf Mathematics Subject Classification (2010): 05C31, 05C20}

\section{Introduction}

All graphs considered in this paper are simple graphs.
For any graph $G=(V, E)$ and any positive integer $k$,
a {\it proper $k$-coloring} $f$ of $G$ is a mapping
$f: V\rightarrow \{1, 2, \ldots, k\}$
such that $f(u)\neq f(v)$ holds
whenever $uv\in E$.
The chromatic polynomial of $G$
is the function $P(G, x)$ such that $P(G, k)$
counts the number of proper $k$-colorings of
$G$ for any positive integer $k$.
In this article, the variable $x$ in $P(G,x)$
is a real number. 
The study of chromatic polynomials is one of the
most active areas in graph theory. 
For basic concepts and properties on chromatic polynomials,
we refer the reader to the monograph~\cite{DKT2005}.
For the most celebrated results on this topic, 
we recommend surveys~\cite{Dong2020,Jackson2015,Royle2009, RT1988}.

The first interpretation of the coefficients
of $P(G,x)$ was provided by Whitney~\cite{Whitney1932}:
for any simple graph $G$ of order $n$ and size $m$,
\begin{align}\relabel{int1}
P(G,x)=\sum_{i=1}^{n}\left( \sum_{r=0}^m (-1)^r N(i,r) \right )x^i,
\end{align}
where $N(i,r)$ is the number of spanning subgraphs of $G$
with exactly $i$ components and $r$ edges.
Whitney further simplified (\ref{int1})
by introducing the notion of broken cycles.
Let $\eta:E\rightarrow \{1,2,\ldots,|E|\}$
be a bijection.
For any cycle $C$ in $G$, the path $C-e$ is called a
{\it broken cycle} of $G$ with respect to
$\eta$, where $e$ is the edge on $C$
with $\eta(e)\le \eta(e')$ for
every edge $e'$ on $C$.
When there is no confusion, a broken cycle of $G$
is always assumed to be with respect to
a bijection $\eta:E\rightarrow \{1,2,\ldots,|E|\}$.

\begin{theorem}[\cite{Whitney1932}]\relabel{brokencycle}
Let $G=(V,E)$ be a graph of order $n$
and $\eta:E\rightarrow \{1,2,\ldots,|E|\}$ be a bijection.
Then,
\begin{align}\relabel{int2}
P(G,x)=\sum_{i=1}^{n} (-1)^{n-i}a_i(G)x^i,
\end{align}
where $a_i(G)$ is the number of
spanning subgraphs of $G$
with $n-i$ edges and $i$ components
which do not contain broken cycles.
\end{theorem}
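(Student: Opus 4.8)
The plan is to reduce the statement to a sign-cancellation in the subset (Whitney-rank) expansion of $P(G,x)$, and then carry out that cancellation with an explicit sign-reversing involution. First I would record the standard identity
\[
P(G,x)=\sum_{A\subseteq E}(-1)^{|A|}x^{c(A)},
\]
where $c(A)$ denotes the number of connected components of the spanning subgraph $(V,A)$; it follows from inclusion--exclusion over the ``monochromatic'' edges of a $k$-coloring. Comparing coefficients of $x^i$ with $P(G,x)=\sum_i(-1)^{n-i}a_ix^i$ yields
\[
a_i(G)=\sum_{A\subseteq E:\,c(A)=i}(-1)^{|A|-(n-i)}.
\]
A spanning subgraph with $i$ components has at least $n-i$ edges, with equality exactly for forests, so every forest term contributes $+1$; and since any $A$ containing a cycle $C$ contains the broken cycle obtained from $C$, the broken-cycle-free sets with $c(A)=i$ are precisely the broken-cycle-free spanning forests with $n-i$ edges and $i$ components. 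Hence it suffices to prove that the last sum, restricted to those $A$ with $c(A)=i$ that \emph{do} contain a broken cycle, equals $0$.

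To obtain this I would construct an involution $\phi$ on the family $\mathcal{A}_i$ of those ``bad'' edge sets. Fix the bijection $\eta$. Say an edge $e=xy$ is \emph{active for} $A$ if $A$ contains an $x$--$y$ path all of whose labels exceed $\eta(e)$; equivalently, $A$ contains a broken cycle whose missing (minimum-label) edge is $e$. For $A\in\mathcal{A}_i$, let $e(A)$ be the active edge of smallest $\eta$-value (well defined, as $\eta$ is injective), and put $\phi(A)=A\triangle\{e(A)\}$. One checks readily that $\phi$ preserves the number of components (adding $e(A)$ to $A$ closes a cycle through the witnessing path, while deleting it leaves that path intact), changes $|A|$ by exactly $1$, and still lies in $\mathcal{A}_i$ (the witnessing broken cycle avoids $e(A)$, so it survives the toggle). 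Thus $\phi$ is fixed-point-free and sign-reversing, and the contributions of $A$ and $\phi(A)$ cancel in pairs.

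The technical heart, and where I expect the real work to be, is checking $\phi\circ\phi=\mathrm{id}$, i.e.\ $e(\phi(A))=e(A)$. Write $e=e(A)$ and $A'=A\triangle\{e\}$. The witnessing path shows $e$ is still active for $A'$, so $\eta(e(A'))\le\eta(e)$; one must rule out an edge $e''$ that is active for $A'$ with $\eta(e'')<\eta(e)$. If a path witnessing this avoids $e$, it lies in $A$ and immediately contradicts the minimality defining $e(A)$. Otherwise that path uses $e$, which forces $e\notin A$ and $A'=A\cup\{e\}$; then one splices the two halves of the path onto the original witnessing $x$--$y$ path in $A$, extracts from the resulting walk a simple path between the endpoints of $e''$ lying inside $A$ with all labels exceeding $\eta(e'')$, and concludes that $e''$ is active for $A$ too — again contradicting minimality. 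Granting this case analysis, $\phi$ is a genuine involution and the theorem follows.

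Finally, I would mention the alternative inductive route by deletion--contraction on the edge $e$ of largest label: broken-cycle-free spanning subgraphs of $G$ avoiding $e$ correspond bijectively to those of $G-e$, and those containing $e$ to those of the contraction $G/e$, so the recursion $P(G,x)=P(G-e,x)-P(G/e,x)$ propagates the statement. There the nuisance of parallel edges created by the contraction plays the role that the involution's well-definedness plays above.
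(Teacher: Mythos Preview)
The paper does not give its own proof of this theorem: it is quoted as Whitney's classical result and used as a black box throughout. So there is nothing to compare your argument against in the paper itself.

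That said, your proposal is a correct and standard proof. The subset expansion $P(G,x)=\sum_{A\subseteq E}(-1)^{|A|}x^{c(A)}$ and the resulting signed sum for $a_i$ are right, and your sign-reversing involution is exactly the usual one (toggle the active edge of least label). The verification that $\phi$ preserves $c(A)$, lands in $\mathcal{A}_i$, and satisfies $e(\phi(A))=e(A)$ is laid out accurately; in particular your Case~2 splice-and-shortcut argument is the genuine point and it goes through because every edge on the concatenated walk has label exceeding $\eta(e'')$, and passing to a simple subpath can only discard edges. Your closing remark about the deletion--contraction alternative on the edge of largest label is also a legitimate route, with the correct caveat about handling multiple edges in $G/e$. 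Either line would be an acceptable proof here; the paper simply assumes the result.
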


Let $G$ be a simple graph of order $n$.
When there is no confusion,
$a_i(G)$ is written as $a_i$ for short.
Clearly, by Theorem~\ref{brokencycle},
$P(G,x)$ is indeed a polynomial in $x$
in which the constant term is $0$,
the leading coefficient $a_n$ is $1$
and all coefficients  are integers
alternating in signs.
Thus, $(-1)^nP(G,x)>0$ holds for all $x<0$.

The concept of broken cycles has the following connection
with Tutte's work of expressing the Tutte polynomial
${\bf T}_G(x,y)$ of a connected graph $G$ 
in terms of spanning trees \cite{Crapo1969, Tutte1954}:
\begin{align}\label{Tuute-ex1}
{\bf T}_G(x,y)=\sum_{T}x^{ia_{\omega}(T)}y^{ea_{\omega}(T)},
\end{align}
where the sum runs over all spanning trees of $G$ 
and $ia_{\omega}(T)$ and $ea_{\omega}(T)$
are respectively the internal and external activities
of $T$ with respect to a bijection
$\omega: E\rightarrow \{1,2,\ldots,|E|\}$.
If we take $\omega$ to be $\eta$, then
$ea_{\eta}(T)$ is exactly the number of edges $e\in E(G)\setminus E(T)$
such that $\eta(e)\le \eta(e')$ holds for all edges $e'$
on the unique cycle $C$ of $T\cup e$.
As $G$ is a simple graph, $ea_{\eta}(T)$ equals the
number of broken cycles contained in $T$ with respect to $\eta$.
In particular, $ea_{\eta}(T)=0$ if and only if
$T$ does not contain broken cycles with respect to $\eta$.
By Theorem~\ref{brokencycle}, $a_1(G)$ is
the number of spanning trees $T$ of $G$ with $ea_{\eta}(T)=0$.
If
\begin{align}\label{Tuute-ex2}
{\bf T}_G(x,y)=\sum\limits_{i\ge 0, j\ge 0} c_{i,j}x^iy^j,
\end{align}
then $a_1(G)=\sum_{i\ge 0}c_{i,0}={\bf T}_G(1,0)$.

As in \cite{LM2006},
for $i=0,1,2,\ldots,n-1$,
we define $b_i(G)$ (or simply $b_i$)
as the probability
that a randomly chosen broken-cycle-free spanning subgraph
of $G$  has size $i$.
Then
\begin{align}\relabel{prob-bi}
b_i=\frac{a_{n-i}}{a_1+a_2+\cdots+a_n},
\quad \forall i=0, 1, \ldots, n-1.
\end{align}
Let $\epsilon(G)$ denote the mean size of a
broken-cycle-free spanning subgraph of $G$. Then
\begin{align}\relabel{meansize}
\epsilon(G)=
\sum_{i=0}^{n-1} ib_i
=\frac{(n-1)a_1+(n-2)a_2+\cdots+a_{n-1}}{a_1+a_2+\cdots+a_n}.
\end{align}

An elementary property of $\epsilon(G)$
is given below.

\begin{prop}[\cite{LM2006}]\relabel{prop-eps}
For any graph $G$ of order $n$,
$\epsilon(G)=n+\frac{P'(G, -1)}{P(G, -1)}$.
\end{prop}

Let $T_n$ denote a tree of order $n$
and $K_n$ denote the complete graph of order $n$.
By Proposition~\ref{prop-eps},
$\epsilon(T_n)=\frac{n-1}{2}$, ,
while
\begin{align}\label{constant}
\epsilon(K_n)=n-\left(1+\frac{1}{2}+\cdots+\frac{1}{n}\right)\sim n-\log n-\gamma
\end{align}
as $n\rightarrow \infty$, where $\gamma\approx 0.577216$
is the Euler-Mascheroni constant.

Lundow and Markstr\"{o}m~\cite{LM2006}
proposed the following conjecture on $\epsilon(G)$.

\begin{conjecture}
[\cite{LM2006}]\relabel{mainconj}
For any connected graph $G$ of order $n$,
where $n\ge 4$,
if $G$ is neither $K_n$ nor a $T_n$,
then $\epsilon(T_n)<\epsilon(G)<\epsilon(K_n)$.
\end{conjecture}

In this paper, we aim to prove and strengthen
Conjecture~\ref{mainconj}.
For any graph $G$, define the function $\epsilon(G,x)$
as follows:
\begin{align}\relabel{epsi-G}
\epsilon(G,x)=\frac{P'(G, x)}{P(G, x)}.
\end{align}
By Proposition~\ref{prop-eps},
$\epsilon(G)=n+\epsilon(G,-1)$ holds for every graph
$G$ of order $n$.
Thus, for any graphs $G$ and $H$ of the same order, $\epsilon(G)<\epsilon(H)$
if and only if $\epsilon(G,-1)<\epsilon(H,-1)$.
Conjecture~\ref{mainconj} is equivalent
to the statement that
$\epsilon(T_n,-1)<\epsilon(G,-1)<\epsilon(K_n,-1)$
holds for any connected graph $G$ of order $n$
which is
neither $K_n$ nor a  $T_n$.

A graph $Q$ is said to be {\it chordal}
if $Q[V(C)]\not\cong C$
for every cycle $C$ of $Q$ with $|V(C)|\ge 4$,
where $Q[V']$ is the subgraph of $Q$ induced by $V'$
for $V'\subseteq V(G)$.
In Section~\ref{firstIn},
we will establish the following result.

\begin{theorem}\relabel{compare-Q}
For any graph $G$,
if $Q$ is a chordal and proper spanning subgraph
of $G$, then
$\epsilon(G,x)>\epsilon(Q,x)$ holds for all  $x<0$.
\end{theorem}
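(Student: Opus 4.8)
The plan is to reduce the general statement to the single-edge case by induction. Since $Q$ is a proper spanning subgraph of $G$, we may add the edges of $E(G)\setminus E(Q)$ one at a time; at each step the intermediate graph $Q'$ obtained from $Q$ need not be chordal, but $Q$ itself remains a chordal spanning subgraph of $Q'$. So it suffices to prove: if $Q$ is a chordal spanning subgraph of $G$ and $e=uv\in E(G)\setminus E(Q)$, then $\epsilon(G,x)>\epsilon(G-e,x)$ for all $x<0$, under the hypothesis that $G-e$ has a chordal spanning subgraph, or — even better — to prove the cleanest form first: if $G$ is obtained from a chordal graph $Q$ by adding one edge, then $\epsilon(G,x)>\epsilon(Q,x)$ for $x<0$, and then feed this back through the induction. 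Actually the right induction is on $|E(G)\setminus E(Q)|$: writing $G' = G - e$ for a well-chosen $e\in E(G)\setminus E(Q)$, we have $Q$ chordal and spanning in $G'$, so $\epsilon(G',x)>\epsilon(Q,x)$ by induction, and it remains to show $\epsilon(G,x)>\epsilon(G',x)$. Thus the whole theorem comes down to the one-edge inequality
\begin{align*}
\frac{P'(G,x)}{P(G,x)} > \frac{P'(G-e,x)}{P(G-e,x)}\qquad (x<0),
\end{align*}
whenever $G-e$ still contains $Q$ as a chordal spanning subgraph — but since the deletion–contraction argument below will need structure, I expect the chordality of $Q$ to enter precisely when controlling the contracted graph $G/e$.

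The key tool is deletion–contraction: $P(G,x)=P(G-e,x)-P(G/e,x)$. Let me write $P_0=P(G-e,x)$, $P_1=P(G/e,x)$, so $P(G,x)=P_0-P_1$, and similarly for derivatives. A short computation gives
\begin{align*}
\epsilon(G,x)-\epsilon(G-e,x)
=\frac{P'(G,x)}{P(G,x)}-\frac{P_0'}{P_0}
=\frac{P_0 P_1' - P_0' P_1}{P_0(P_0-P_1)}.
\end{align*}
For $x<0$ of a graph on $n$ vertices, $(-1)^nP(H,x)>0$, so the signs of $P_0=P(G-e,x)$ and $P(G,x)=P(G-e,x)-P(G/e,x)$ are both $(-1)^n$ while $P_1=P(G/e,x)$ has sign $(-1)^{n-1}$; hence $P_0(P_0-P_1)>0$ and the denominator is positive. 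So the inequality $\epsilon(G,x)>\epsilon(G-e,x)$ is equivalent to $P_0 P_1' - P_0' P_1 > 0$, i.e. to
\begin{align*}
\frac{P'(G/e,x)}{P(G/e,x)} > \frac{P'(G-e,x)}{P(G-e,x)}\qquad(x<0),
\end{align*}
after dividing by $P_0 P_1$ and tracking signs (dividing by $P_0P_1<0$ reverses the inequality, which is why the quotient of log-derivatives flips to this orientation — I will check the sign bookkeeping carefully). This says $\epsilon(G/e,x)>\epsilon(G-e,x)$. Now $G/e$ has $n-1$ vertices and $G-e$ has $n$ vertices, so these are chromatic-equivalent-in-spirit but of different orders; here is where I want to use that $G/e$ is a proper spanning quotient and relate it to $G-e$ via the vertex $u$: one natural route is that $P(G-e,x) = \frac{1}{x}\bigl(\text{something}\bigr)$ is awkward, so instead I would try to compare $\epsilon(G/e,x)$ with $\epsilon((G-e)\cdot uv,x)$ — but $(G-e)$ with $u,v$ identified \emph{is} $G/e$ (up to multi-edges, which don't affect $P$), so this is a tautology. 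The real content must come from chordality: when $Q$ is chordal, one can order $V$ as a perfect elimination ordering, and $P(Q,x)=\prod_{j}(x-d_j)$ where $d_j$ is the number of earlier neighbours; for such $Q$, $\epsilon(Q,x)=\sum_j \frac{1}{x-d_j}$ is an explicit sum, and adding an edge can only increase the relevant "back-degrees." I would make this precise by reducing, via the induction above, to the base case $G$ = chordal graph plus one edge, and then using the product formula on $Q$ together with deletion–contraction on the single extra edge to get an explicit, manifestly-signed expression.

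The step I expect to be the main obstacle is exactly this comparison of $\epsilon$ across the deletion–contraction, i.e. proving $\epsilon(G/e,x)>\epsilon(G-e,x)$ for $x<0$ in the required generality; the order drop from $n$ to $n-1$ means one cannot argue coefficient-by-coefficient via Whitney's theorem directly, and the chordality hypothesis has to be invoked to make $G-e$ (or a spanning subgraph of it) have a perfect elimination ordering compatible with $e$. A clean way to organize this: pick $e=uv\in E(G)\setminus E(Q)$ such that in a perfect elimination ordering of $Q$, deleting can be arranged so that the structural induction keeps $Q$ chordal and spanning at every stage — this is automatic since $Q$ is unchanged — and then the base case is $|E(G)\setminus E(Q)|=1$. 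For that base case, with $G=Q+e$, $e=uv$, write $P(Q,x)=P(Q-v,x)\,(x-r)$ where $r$ is the number of neighbours of $v$ in $Q$ among a perfect elimination ordering placing $v$ last (possible since $Q$ chordal), do deletion–contraction on $e$, and reduce to a one-variable inequality about $\frac{1}{x-r}$ versus $\frac{1}{x-r'}$ with $r'>r$; for $x<0$ this is immediate because $\frac{1}{x-r}$ is increasing in $r$ on $(-\infty,0)$ precisely when... — I will verify this monotonicity sign carefully, as it is the crux. If the explicit product-formula route proves too rigid, the fallback is to prove the log-concavity-type inequality $P(H,x)P''(H,x)\le (P'(H,x))^2$-style bounds are \emph{not} what is needed; rather what is needed is monotonicity of $\epsilon$ under edge addition among the restricted family where a chordal skeleton is preserved, and I would prove that by the deletion–contraction identity above combined with an induction on the number of vertices, using $G/e \supseteq Q/e$ with $Q/e$ still chordal (contraction preserves chordality) as the inductive sub-instance. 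That last observation — contraction of an edge preserves chordality — is the key structural fact that closes the induction, so I would foreground it early.
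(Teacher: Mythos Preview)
Your deletion--contraction framework and the reduction
\[
\epsilon(G,x)>\epsilon(G-e,x)\quad\Longleftrightarrow\quad \epsilon(G/e,x)>\epsilon(G-e,x)
\]
are correct (up to a harmless sign slip in the numerator). The gap is in how you close the induction on the contracted side.

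You write that ``contraction of an edge preserves chordality'' and plan to use $Q/e$ as a chordal spanning subgraph of $G/e$. But $e\notin E(Q)$, so forming $Q/e$ means \emph{identifying two non-adjacent vertices of a chordal graph}, and that does not preserve chordality: identifying the two endpoints of $P_5$ yields $C_4$. So the object you want to feed back into the induction need not be chordal at all. Moreover, even if $Q/e$ happened to be chordal, the inductive hypotheses would give $\epsilon(G/e,x)\ge \epsilon(Q/e,x)$ and $\epsilon(G-e,x)\ge \epsilon(Q,x)$, and there is no way to combine these into $\epsilon(G/e,x)>\epsilon(G-e,x)$; you would be trying to prove edge-monotonicity of $\epsilon(\cdot,x)$, which is strictly stronger than the theorem and is exactly what the chordality hypothesis is meant to let you avoid. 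Your base-case sketch has the same problem in miniature: writing $P(Q,x)=P(Q-v,x)(x-r)$ assumes the endpoint $v$ of $e$ is simplicial in $Q$, but an arbitrary endpoint of an arbitrary missing edge need not be.

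The paper's proof repairs all three issues with one move: it never compares $\epsilon(G,x)$ with $\epsilon(G-e,x)$, only with $\epsilon(Q,x)$ directly. Pick the edge $e=u_nv\in E(G)\setminus E(Q)$ so that $u_n$ is the \emph{last} vertex in a perfect elimination ordering of $Q$ (one shows such an $e$ exists, else $u_n$ would be simplicial in $G$ and one reduces to $G-u_n$). Then $Q-u_n$, not ``$Q/e$'', is a chordal spanning subgraph of $G/e$, so induction gives $\epsilon(G-e,x)\ge\epsilon(Q,x)$ and $\epsilon(G/e,x)\ge\epsilon(Q-u_n,x)$. Using the explicit formula $\epsilon(Q,x)=\sum_{i}\frac{1}{x-d_{Q_i}(u_i)}$ and the identity $\epsilon(Q,x)=\epsilon(Q-u_n,x)+\frac{1}{x-d_Q(u_n)}$, one multiplies through by $(-1)^nP(G-e,x)$ and $(-1)^{n-1}P(G/e,x)$, adds via deletion--contraction, and obtains $(\epsilon(G,x)-\epsilon(Q,x))(-1)^nP(G,x)>0$; the strict sign comes from the extra term $\frac{(-1)^{n+1}P(G/e,x)}{x-d_Q(u_n)}$, which is strictly negative for $x<0$. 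The point is that deleting a simplicial vertex, unlike identifying two non-adjacent vertices, keeps you inside the chordal world.
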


Note that any tree is a chordal graph
and any connected graph contains a spanning tree.
Thus, we have the following corollary
which obviously implies the first part of
Conjecture~\ref{mainconj}.

\begin{corollary}\relabel{compare-T}
For any connected graph $G$ of order $n$
which is not a tree,
$\epsilon(G, x)>\epsilon(T_n, x)$
holds for all  $x<0$.
\end{corollary}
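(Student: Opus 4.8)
The plan is to deduce the corollary directly from Theorem~\ref{compare-Q} by producing an appropriate subgraph $Q$. Since $G$ is connected, it contains a spanning tree $T$; since $G$ is not a tree, we have $|E(T)|=n-1<|E(G)|$, so $T$ is a \emph{proper} spanning subgraph of $G$. Moreover, $T$ is chordal: it contains no cycle at all, so the defining condition ``$T[V(C)]\not\cong C$ for every cycle $C$ of $T$ with $|V(C)|\ge 4$'' holds vacuously. Hence $T$ is a chordal and proper spanning subgraph of $G$, and Theorem~\ref{compare-Q} immediately gives $\epsilon(G,x)>\epsilon(T,x)$ for all $x<0$.

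It then remains to observe that the right-hand side does not depend on which spanning tree was chosen. Indeed, $P(T,x)=x(x-1)^{n-1}$ for every tree $T$ of order $n$, so $\epsilon(T,x)=P'(T,x)/P(T,x)$ is one and the same function of $x$ for all trees of order $n$; in particular $\epsilon(T,x)=\epsilon(T_n,x)$. Combining this identity with the inequality from the previous paragraph yields $\epsilon(G,x)>\epsilon(T_n,x)$ for all $x<0$, which is the claim.

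Since all of the mathematical substance is carried by Theorem~\ref{compare-Q}, this deduction has no real obstacle; the only points deserving an explicit word are that a spanning tree of $G$ is automatically a chordal proper spanning subgraph and that $\epsilon(T_n,x)$ is well defined independently of the particular tree of order $n$. To close the loop with the original conjecture, I would finally specialize to $x=-1$ and invoke $\epsilon(G)=n+\epsilon(G,-1)$ from Proposition~\ref{prop-eps}, which recovers $\epsilon(T_n)<\epsilon(G)$ for every connected non-tree $G$ of order $n$, i.e.\ the first inequality of Conjecture~\ref{mainconj}.
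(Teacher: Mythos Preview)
Your proof is correct and follows the same approach as the paper, which simply notes that any tree is chordal and any connected graph contains a spanning tree, so Theorem~\ref{compare-Q} applies. Your additional remarks---that the spanning tree is \emph{proper} because $G$ is not a tree, and that $\epsilon(T_n,x)$ is independent of the particular tree since $P(T,x)=x(x-1)^{n-1}$---make explicit what the paper leaves implicit.
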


The second part of Conjecture~\ref{mainconj}
is extended to the inequality
$\epsilon(K_n,x)>\epsilon(G,x)$
for any non-complete graph $G$ of order $n$ and all $x<0$.
In order to prove this inequality,
we will show in Section~\ref{SecondIn} that
it suffices to establish the following result.

\begin{theorem}\relabel{average-th}
For any non-complete graph $G=(V,E)$ of order $n$,
\begin{align}
(-1)^{n}(x-n+1)\sum_{u\in V}P(G-u, x)
+(-1)^{n+1}nP(G, x)> 0
\relabel{right-2}
\end{align}
holds for all $x<0$.
\end{theorem}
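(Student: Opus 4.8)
The plan is to reduce the inequality to a statement about the coefficients $a_i(G)$ and then exploit Whitney's Broken-cycle Theorem together with the deletion structure of $G$. Writing $P(G,x)=\sum_{i=1}^n(-1)^{n-i}a_i(G)x^i$ and, for each $u\in V$, $P(G-u,x)=\sum_{i=1}^{n-1}(-1)^{n-1-i}a_i(G-u)x^i$, I would expand the left-hand side of~(\ref{right-2}) as a polynomial in $x$ and collect terms. The factor $(x-n+1)$ contributes two pieces; after multiplying through by $(-1)^n$ and regrouping, the claim becomes that a certain polynomial in $-x$ with coefficients built from $\{a_i(G)\}$ and $\{\sum_u a_i(G-u)\}$ is strictly positive for all $x<0$, i.e. that each coefficient of $(-x)^j$ is nonnegative and at least one is strictly positive. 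Since $-x>0$ when $x<0$, positivity of every coefficient suffices; the non-completeness of $G$ is what should force strict positivity of at least one coefficient.

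The key identity I would establish is a coefficient-wise comparison: for each index $i$, relate $\sum_{u\in V}a_i(G-u)$ to $a_i(G)$ and $a_{i+1}(G)$. Using Theorem~\ref{brokencycle}, $a_i(G)$ counts broken-cycle-free spanning subgraphs of $G$ with $i$ components (equivalently, $n-i$ edges), and the same for $G-u$. Given a broken-cycle-free spanning subgraph $F$ of $G$ with components $F_1,\dots,F_i$, deleting a vertex $u$ lying in component $F_j$ yields a broken-cycle-free spanning subgraph of $G-u$ provided $u$ is not a cut vertex of $F_j$ (if $u$ is a leaf or isolated in $F_j$), and in general the relationship is governed by how many components of $F-u$ arise. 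Summing over all $u$, a double-counting argument should yield an identity roughly of the shape $\sum_{u}a_i(G-u) = (\text{something})\,a_i(G) + (\text{something})\,a_{i+1}(G) - (\text{correction})$, where the correction term vanishes precisely when $G$ is complete (because in $K_n$ every broken-cycle-free spanning subgraph with at least one edge is quite rigid). The Greene--Zaslavsky interpretation of $a_1$ via acyclic orientations with a unique fixed source, alluded to in the abstract, is presumably the tool that pins down the boundary term and certifies that the correction is strictly positive for non-complete $G$.

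Concretely, I expect the argument to run as follows. First, fix the bijection $\eta$ on $E(G)$ and use its restriction to $E(G-u)$ to count broken cycles consistently across all the deleted graphs. Second, for a fixed broken-cycle-free spanning subgraph $F$ of $G$ with $i$ components, count the number of vertices $u$ such that $F-u$ (as a spanning subgraph of $G-u$) is broken-cycle-free with a prescribed number of components; a leaf/non-leaf dichotomy on each tree-component handles this, and summing gives the needed linear relation between $\sum_u a_i(G-u)$, $a_i(G)$, and $a_{i+1}(G)$. Third, substitute these relations into the expanded form of~(\ref{right-2}) and verify that every coefficient of $(-x)^j$ is nonnegative, with the lowest-degree (or the $a_1$-related) coefficient strictly positive exactly when $G\neq K_n$ — here is where I would invoke the acyclic-orientation interpretation to show the relevant combinatorial quantity is positive. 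The main obstacle will be step two: getting the deletion identity with the correct correction term and proving that this term is zero if and only if $G$ is complete. That is the crux, and it is where Whitney's theorem and the Greene--Zaslavsky interpretation must be combined carefully; once that identity is in hand, the positivity for $x<0$ is a routine sign check on a polynomial with nonnegative coefficients in $-x$.
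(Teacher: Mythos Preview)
Your reduction is correct and matches the paper: expand the left side of~(\ref{right-2}) as $\sum_{i=1}^n(-1)^i d_i x^i$ with
\[
d_i=\sum_{u\in V}\bigl[a_{i-1}(G-u)+(n-1)a_i(G-u)\bigr]-na_i(G),
\]
and prove $d_i\ge 0$ for all $i$ with strict inequality for some $i$ when $G\ne K_n$. This is exactly Proposition~\ref{pos-d}. The gap is in how you propose to prove $d_i\ge 0$.

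Your step two --- a double count on broken-cycle-free forests via a ``leaf/non-leaf dichotomy on each tree-component'' of $F$, aiming at an identity $\sum_u a_i(G-u)=A\,a_i(G)+B\,a_{i+1}(G)-(\text{correction})$ --- does not work as stated. First, no such linear identity with universal constants $A,B$ exists: already for trees the value of $\sum_u a_i(G-u)$ depends on the full degree sequence, not just on $a_i(G)$ and $a_{i+1}(G)$. Second, deleting a vertex $u$ from a broken-cycle-free forest $F$ only keeps the component count under control when $u$ is isolated or a leaf in $F$; internal vertices raise the component count by $\deg_F(u)-1$, so the dichotomy leaves most pairs $(F,u)$ unaccounted for. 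What is true (and easy) is that $a_j(G-u)$ equals the number of broken-cycle-free spanning forests of $G$ with $j+1$ components in which $u$ is isolated; but turning that observation into the inequality $na_i(G)\le \sum_u a_{i-1}(G-u)+(n-1)\sum_u a_i(G-u)$ by elementary double counting is exactly the hard part, and your sketch does not supply the missing injection.

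The paper's route through this step is quite different from what you outline, and the Greene--Zaslavsky interpretation is not a boundary tool for strictness but the engine of the whole argument. The key is Theorem~\ref{interpre2}: combining Whitney with Greene--Zaslavsky, one writes
\[
a_i(G)=\sum_{(V_1,\ldots,V_i)\in\mathcal{OP}_{i,v}(V)}\alpha(G[V_1],v)\prod_{j\ge 2}\alpha(G[V_j],m_j),
\]
a sum over ordered partitions of products of counts of acyclic orientations with a prescribed unique source. Summing over $v$ gives a formula for $na_i(G)$; splitting by whether $|V_1|=1$ yields the $\sum_v a_{i-1}(G-v)$ piece exactly. For the remaining partitions with $|V_1|\ge 2$, the crucial inequality is
\[
\alpha\bigl(G[V_1],v,s\bigr)\ \le\ \alpha\bigl(G[V_1\setminus\{s\}],v\bigr),
\]
where the left side counts acyclic orientations of $G[V_1]$ with $v$ as unique source and $s$ as one of the sinks; summing over sinks $s$ overcounts $\alpha(G[V_1],v)$ and produces the factor $(n-1)\sum_u a_i(G-u)$. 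Strictness for non-complete $G$ then comes from a separate structural lemma (Lemma~\ref{le5-1}) guaranteeing partitions where the inequality is strict. None of this is visible from the broken-cycle-free forest picture you propose; you would need to switch to the acyclic-orientation encoding of $a_i$ to make the argument go through.
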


Note that the left-hand side of
(\ref{right-2}) vanishes when $G\cong K_n$.
Theorem~\ref{average-th} will be proved in
Section~\ref{finalproof},
based on Greene $\&$ Zaslavsky's interpretation in \cite{GZ1983}
for coefficients $a_i(G)$'s
of $P(G,x)$ by acyclic orientations
introduced in Section~\ref{interp}.
By applying Theorem~\ref{average-th} and
two lemmas in Section~\ref{SecondIn},
we will finally prove the second main result in this article.

\begin{theorem}\relabel{compare-K}
For any non-complete graph $G$ of order $n$,
$\epsilon(G,x)<\epsilon(K_n,x)$ holds for
all $x<0$.
\end{theorem}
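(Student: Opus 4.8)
The plan is to reduce Theorem~\ref{compare-K} to the displayed auxiliary inequality promised in the abstract, namely that
\begin{align}\relabel{aux-ineq}
(-1)^n(x-n+1)\sum_{u\in V}P(G-u,x)+(-1)^{n-1}nP(G,x)>0
\end{align}
holds for every non-complete graph $G$ of order $n$ and every real $x<0$. First I would record the standard deletion identity $P'(G,x)=\sum_{u\in V}P(G-u,x)$ (this follows, e.g., from differentiating Whitney's rank-generating expression, or from the fact that $P(G,x)/\prod$-type telescoping; in any case it is classical). Hence $\epsilon(G,x)=\big(\sum_{u\in V}P(G-u,x)\big)/P(G,x)$. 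For $K_n$ one computes directly $\epsilon(K_n,x)=\sum_{j=0}^{n-1}\frac{1}{x-j}$, using $P(K_n,x)=x(x-1)\cdots(x-n+1)$; in particular $\epsilon(K_n,x)=\frac{n}{x}+\epsilon(K_{n-1}\text{-shift})$, but more usefully $x\,\epsilon(K_n,x)-n=\sum_{j=1}^{n-1}\frac{x}{x-j}\cdot$(something) — the clean statement I want is $\epsilon(K_n,x)=\frac{n}{x-n+1}+\big(\text{lower terms}\big)$, so I would instead massage the target inequality $\epsilon(G,x)<\epsilon(K_n,x)$ into a polynomial-positivity statement by clearing denominators against $(-1)^nP(G,x)>0$.

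Concretely, since $(-1)^nP(G,x)>0$ for $x<0$, the inequality $\epsilon(G,x)<\epsilon(K_n,x)$ is equivalent to
\begin{align}\relabel{target-poly}
\epsilon(K_n,x)\,(-1)^nP(G,x)-(-1)^n P'(G,x)>0,\quad x<0.
\end{align}
The second step is to show this follows from \eqref{aux-ineq}. The bridge is the elementary bound $\epsilon(K_n,x)\ge \frac{n}{x-n+1}$ for $x<0$: indeed $\epsilon(K_n,x)=\sum_{j=0}^{n-1}\frac{1}{x-j}$, and for $x<0$ each term $\frac{1}{x-j}$ is negative, with $\frac{1}{x-n+1}$ the most negative (largest in absolute value) since $x-n+1$ is the negative number closest to $0$ among $x,x-1,\dots,x-n+1$ — wait, that gives $\frac{1}{x-n+1}$ as the \emph{smallest} summand; I would need the reverse. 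So the correct elementary fact is $\epsilon(K_n,x)\le \frac{n}{x-n+1}$ is false; rather $\epsilon(K_n,x)\le \frac{1}{x-n+1}+\frac{n-1}{x}$, and more to the point one checks directly that $(x-n+1)\epsilon(K_n,x)\le n$ for $x<0$ (each term $\frac{x-n+1}{x-j}\in(0,1]$), hence $(x-n+1)\epsilon(K_n,x)-n\le 0$, i.e. $n-(x-n+1)\epsilon(K_n,x)\ge 0$. Multiplying \eqref{target-poly} by $(x-n+1)>$? — note $x-n+1<0$, so multiplying flips the sign. I would therefore write \eqref{target-poly} as: it suffices that $(-1)^n\big[(x-n+1)P'(G,x)-(x-n+1)\epsilon(K_n,x)P(G,x)\big]<0$, and then replace $(x-n+1)\epsilon(K_n,x)$ by the upper bound $n$ (permissible because $(-1)^nP(G,x)>0$ and we are subtracting a term that only gets more negative when $\epsilon(K_n,x)$'s coefficient drops from $n$), reducing to $(-1)^n\big[(x-n+1)P'(G,x)-nP(G,x)\big]<0$, which upon multiplying by $-1$ and using $P'(G,x)=\sum_u P(G-u,x)$ is exactly \eqref{aux-ineq}.

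The final and hardest step is proving \eqref{aux-ineq} itself. Here the plan is to expand both $P(G,x)$ and the $P(G-u,x)$ in the $(-1)^{n-i}a_i x^i$ form, turning the left side into a polynomial in $x$ with explicit coefficients built from the $a_i(G)$ and $a_i(G-u)$, and then to prove that polynomial is positive for $x<0$ coefficient-by-coefficient (after pairing $x-n+1$ through). One knows $a_1(G)$ counts, by Greene and Zaslavsky's theorem, the acyclic orientations of $G$ with a unique fixed source (equivalently, by Whitney, the broken-cycle-free spanning trees), and $\sum_u a_1(G-u)$ has a parallel interpretation; the key combinatorial inequality will be a relation between $a_i(G)$, $\sum_u a_i(G-u)$, and $(n-i)$-type weights, strict exactly when $G\ne K_n$. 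I expect the main obstacle to be establishing these coefficient-wise inequalities with the right strictness — in particular isolating why equality forces $G=K_n$ — and this is presumably where Whitney's broken-cycle theorem and the Greene–Zaslavsky interpretation of $a_1$ via special acyclic orientations, both flagged in the abstract, do the real work; I would look for a chordal/simplicial-vertex reduction or an orientation-counting bijection to nail down the extremal case.
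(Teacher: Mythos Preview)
Your reduction hinges on the claimed ``standard deletion identity'' $P'(G,x)=\sum_{u\in V}P(G-u,x)$; unfortunately this is false. Already for $G=K_2$ one has $P(K_2,x)=x(x-1)$, so $P'(K_2,x)=2x-1$, whereas $\sum_u P(K_2-u,x)=2x$. (You may be thinking of the analogous identity for characteristic polynomials of matrices and their principal submatrices; it does not carry over to chromatic polynomials.) Without this relation there is no direct bridge from the auxiliary inequality, which involves $\sum_u P(G-u,x)$, to the target inequality $\epsilon(K_n,x)(-1)^nP(G,x)-(-1)^nP'(G,x)>0$, which involves $P'(G,x)$.

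Even granting the false identity, the bounding step goes the wrong way. For $x<0$ and $0\le j\le n-1$, both $x-n+1$ and $x-j$ are negative with $|x-n+1|\ge|x-j|$, so each ratio $(x-n+1)/(x-j)$ is at least $1$; hence $(x-n+1)\epsilon(K_n,x)\ge n$, i.e.\ $\epsilon(K_n,x)\le n/(x-n+1)$. But to replace $\epsilon(K_n,x)$ by $n/(x-n+1)$ in $\epsilon(K_n,x)\cdot(-1)^nP(G,x)$ and still get a \emph{lower} bound, you would need $\epsilon(K_n,x)\ge n/(x-n+1)$, the opposite inequality.

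The paper does not attempt a direct bound. It sets $\xi(G,x)=(-1)^nP(G,x)\sum_{i=0}^{n-1}\frac{1}{x-i}+(-1)^{n+1}P'(G,x)$ and proves $\xi(G,x)>0$ by induction: a recursion (obtained by isolating one vertex $u$ and expanding $P(G,x)$ via deletion/contraction at the edges incident to $u$) expresses $\xi(G,x)$ as $(1-x)\xi(G-u,x)$ plus a nonnegative combination of $\xi$-values of graphs of order $n-1$, plus the residual term
\[
\frac{(-1)^n\big[(x-n+1)P(G-u,x)-P(G,x)\big]}{n-1-x}.
\]
It is only this residual, for \emph{some} single vertex $u$, that the auxiliary inequality is used to control --- and positivity of the residual for some $u$ follows at once from the averaged inequality over all $u$. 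Your sketch for proving the averaged inequality itself (coefficient-wise positivity via Whitney's broken-cycle theorem and the Greene--Zaslavsky interpretation of $a_1$) is on the right track and matches what the paper does.
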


\section{Proof of Theorem~\ref{compare-Q}
\relabel{firstIn}}

A vertex $u$ in a graph $G$ is called a {\it simplicial
vertex} if
$\{u\}\cup N_G(u)$ is a clique of $G$,
where $N_G(u)$ is the set of vertices in
$G$ which are adjacent to $u$.
For a simplicial vertex $u$ of $G$,
$P(G,x)$ has the following property
(see~\cite{DKT2005, Read1968, RT1988}):
\begin{align}\relabel{sim-ch}
P(G,x)=(x-d(u))P(G-u,x),
\end{align}
where $G-u$ is the subgraph of $G$ induced by $V-\{u\}$
and $d(u)$ is the degree of $u$ in $G$.
By (\ref{sim-ch}), it is not difficult to show the following.

\begin{prop}\relabel{sim-epsi}
If $u$ is a simplicial vertex of a graph $G$, then
\begin{align}\relabel{sim-ch1}
\epsilon(G,x)=\frac{1}{x-d(u)}+\epsilon(G-u,x).
\end{align}
\end{prop}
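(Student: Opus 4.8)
The plan is to differentiate the identity (\ref{simplicial-ch}) and then divide back by $P(G,x)$. Since $u$ is a simplicial vertex of $G$, we have $P(G,x)=(x-d(u))P(G-u,x)$. Differentiating both sides with respect to $x$ by the product rule gives
\begin{align}
P'(G,x)=P(G-u,x)+(x-d(u))P'(G-u,x).
\relabel{sim-deriv}
\end{align}
Now I would divide (\ref{sim-deriv}) by $P(G,x)$. Using $P(G,x)=(x-d(u))P(G-u,x)$ on the first term of the right-hand side and again on the denominator of the second term, the right-hand side becomes
$$
\frac{P(G-u,x)}{(x-d(u))P(G-u,x)}+\frac{(x-d(u))P'(G-u,x)}{(x-d(u))P(G-u,x)}
=\frac{1}{x-d(u)}+\frac{P'(G-u,x)}{P(G-u,x)},
$$
which by the definition (\ref{epsi-G}) of $\epsilon(\cdot,x)$ is exactly $\frac{1}{x-d(u)}+\epsilon(G-u,x)$, while the left-hand side is $\epsilon(G,x)$. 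This yields (\ref{simplicial-ch1}).

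The only points needing a word of care are the divisions: one must check that $P(G,x)\neq 0$ and $P(G-u,x)\neq 0$ and $x-d(u)\neq 0$ at the values of $x$ under consideration. Since the proposition is stated as a polynomial-functional identity (no restriction on $x$ is imposed in the statement), I would phrase it as an identity of rational functions in $x$, valid wherever all the denominators are nonzero; equivalently, one can first establish the polynomial identity $P(G-u,x)\,P(G,x)+(x-d(u))P'(G-u,x)\,P(G,x)=P'(G,x)\,(x-d(u))\,P(G-u,x)$ obtained by multiplying (\ref{sim-deriv}) through by $(x-d(u))P(G-u,x)$ and substituting (\ref{simplicial-ch}), and then divide by the common factor. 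There is no real obstacle here; the result is an immediate consequence of the product rule applied to the deletion formula for a simplicial vertex, and the reduction factor $1/(x-d(u))$ is precisely the logarithmic derivative of the linear factor $x-d(u)$ peeled off in (\ref{simplicial-ch}).
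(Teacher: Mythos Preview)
Your proof is correct and is exactly the intended argument: the paper merely says the proposition follows from the factorization $P(G,x)=(x-d(u))P(G-u,x)$, and differentiating this identity and dividing by $P(G,x)$ (equivalently, taking logarithmic derivatives) is precisely how one fills in that step. Your extra remark about the divisions being understood as an identity of rational functions is fine and matches how the paper uses $\epsilon(G,x)$.
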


It has been shown that a graph $Q$ of order $n$
is chordal if and only if
$Q$ has an ordering $u_1,u_2,\ldots,u_n$
of its vertices such that
$u_i$ is a simplicial vertex in
$Q[\{u_1,u_2,\ldots,u_i\}]$
for all $i=1,2,\ldots,n$
(see \cite{Dirac1961, FG1965}).
Such an ordering of vertices in $Q$ is
called a {\it perfect elimination ordering} of $Q$.
For any perfect
elimination ordering $u_1,u_2,\ldots,u_n$ of
a chordal graph $Q$,
by Proposition~\ref{sim-epsi},
\begin{align}\relabel{sim-ch2-1}
\epsilon(Q,x)=\sum_{i=1}^n \frac 1{x-d_{Q_i}(u_i)},
\end{align}
where $Q_i$ is the subgraph $Q[\{u_1,u_2,\ldots,u_i\}]$.

Now we are ready to prove Theorem~\ref{compare-Q}.

\vspace{0.2 cm}

\noindent {\it Proof of Theorem~\ref{compare-Q}}:
Let $G$ be any graph of order $n$
and $Q$ be any chordal
and proper spanning subgraph of $G$.
When $n\le 3$, it is not difficult to verify that
$\epsilon(G,x)>\epsilon(Q,x)$ holds for all $x<0$.

Suppose that Theorem~\ref{compare-Q} fails
and $G=(V,E)$ is a counter-example to this result
such that $|V|+|E|$ has the minimum value among all
counter-examples. Thus the result holds
for any graph $H$ with $|V(H)|+|E(H)|<|V|+|E|$
and any chordal and proper spanning subgraph $Q'$ of $H$,
but $G$ has a chordal and proper spanning subgraph $Q$ such that $\epsilon(G,x)\le \epsilon(Q,x)$ holds for some $x<0$.

We will establish the following claims.
Let $u_1,u_2,\ldots,u_n$ be a perfect elimination ordering
of $Q$ and $Q_i=Q[\{u_1,\ldots,u_i\}]$ for all
$i=1,2,\ldots,n$.
So $u_i$ is a simplicial vertex of $Q_i$ for
all $i=1,2,\ldots,n$.

\noindent {\bf Claim 1}:
$u_n$ is not a simplicial vertex of $G$.

Note that $Q-u_n$ is chordal and a spanning
subgraph of $G-u_n$.
By the assumption on the minimality of $|V|+|E|$,
$\epsilon(G-u_n,x)\ge \epsilon(Q-u_n,x)$ holds
for all $x<0$,
where the inequality is strict whenever
$Q-u_n\not\cong G-u_n$.

Clearly $d_G(u_n)\ge d_Q(u_n)$.
As $Q$ is a proper subgraph of $G$,
$d_G(u_n)>d_Q(u_n)$
in the case that $G-u_n\cong Q-u_n$.
If $u_n$ is also a simplicial vertex of $G$,
then by Proposition~\ref{sim-epsi},
\begin{align}\relabel{comp-GQ}
\epsilon(G,x)=\frac 1{x-d_G(u_n)}+\epsilon(G-u_n,x),
\quad
\epsilon(Q,x)=\frac 1{x-d_Q(u_n)}+\epsilon(Q-u_n,x),
\end{align}
implying that $\epsilon(G,x)>\epsilon(Q,x)$ holds for all
$x<0$, a contradiction.
Hence Claim 1 holds.

\noindent {\bf Claim 2}: $d_G(u_n)>d_Q(u_n)$.

Clearly $d_G(u_n)\ge d_Q(u_n)$.
Since $u_n$ is a simplicial vertex of $Q$
and $Q$ is a subgraph of $G$,
$d_G(u_n)=d_Q(u_n)$ implies that $u_n$ is a simplicial
vertex of $G$,
contradicting Claim 1.
Thus Claim 2 holds.

For any edge $e$ in $G$, let $G-e$ be the graph
obtained from $G$ by deleting $e$. Let $G/e$ be
the graph obtained from $G$ by contracting $e$
and replacing multiple edges, if any arise,
by single edges.

\noindent {\bf Claim 3}:
For any $e=u_nv\in E-E(Q)$,
both $\epsilon(G-e,x)\ge \epsilon(Q,x)$
and $\epsilon(G/e,x)\ge \epsilon(Q-u_n,x)$
hold for all $x<0$.

As $e=u_nv\in E-E(Q)$,
$Q$ is a spanning subgraph of $G-e$ and $Q-u_n$ is a spanning subgraph of $G/e$.
As both $Q$ and $Q-u_n$ are chordal, by the assumption on
the minimality of $|V|+|E|$, the theorem holds for both $G-e$ and $G/e$.
Thus this claim holds.

\noindent {\bf Claim 4}:
$\epsilon(G,x)>\epsilon(Q,x)$ holds for all
$x<0$.

By Claim 2, there exists $e=u_nv\in E-E(Q)$.
By Claim 3, $\epsilon(G-e,x)\ge \epsilon(Q,x)$
and $\epsilon(G/e,x)\ge \epsilon(Q-u_n,x)$
hold for all $x<0$.
By (\ref{epsi-G}) and (\ref{sim-ch2-1}),
\begin{eqnarray}\relabel{G-1-eq1}
\  &   & (\epsilon(G-e,x)-\epsilon(Q,x))
\times (-1)^nP(G-e,x)\nonumber \\
& = & (-1)^nP'(G-e,x)+(-1)^{n+1}P(G-e,x)\sum_{i=1}^n
\frac 1{x-d_{Q_i}(u_i)}.
\end{eqnarray}
As $(-1)^nP(G-e,x)>0$ and
$\epsilon(G-e,x)\ge \epsilon(Q,x)$ for all $x<0$,
the left-hand side of (\ref{G-1-eq1}) is non-negative for $x<0$,
implying that
the right-hand side of (\ref{G-1-eq1}) is also non-negative
for $x<0$,
i.e., 
\begin{eqnarray}\relabel{G-1-eq1-1}
(-1)^nP'(G-e,x)+(-1)^{n+1}P(G-e,x)\sum_{i=1}^n
\frac 1{x-d_{Q_i}(u_i)}\ge 0,\quad \forall x<0.
\end{eqnarray}
As $u_1,\ldots,u_{n-1}$
is a perfect elimination ordering of $Q-u_n$
and  $\epsilon(G/e,x)\ge \epsilon(Q-u_n,x)$ holds
for all $x<0$, similarly we have:
\begin{align}\relabel{G-1-eq1-2}
(-1)^{n-1}P'(G/e,x)+(-1)^{n}P(G/e,x)\sum_{i=1}^{n-1}
\frac 1{x-d_{Q_i}(u_i)}\ge 0,\quad \forall x<0.
\end{align}
As $(-1)^{n-1}P(G/e,x)>0$ holds for all $x<0$,
(\ref{G-1-eq1-2}) implies that
\begin{eqnarray}\relabel{G-1-eq1-3}
\  &   &
(-1)^{n-1}P'(G/e,x)+(-1)^{n}P(G/e,x)\sum_{i=1}^{n}
\frac 1{x-d_{Q_i}(u_i)}
\nonumber \\
& \ge &
\frac {(-1)^{n}P(G/e,x)}{x-d_{Q_n}(u_n)} >0,
\qquad \forall x<0.
\end{eqnarray}
By the deletion-contraction formula for chromatic
polynomials,
\begin{align}\label{del-con}
P(G, x)=P(G-e, x)-P(G/e, x),\quad
P'(G, x)=P'(G-e, x)-P'(G/e, x).
\end{align}
Then (\ref{G-1-eq1-1}), (\ref{G-1-eq1-3}) 
and (\ref{del-con}) imply that
\begin{align}\relabel{G-1-eq1-4}
(-1)^nP'(G,x)+(-1)^{n+1}P(G,x)\sum_{i=1}^n
\frac 1{x-d_{Q_i}(u_i)}> 0,\quad \forall x<0.
\end{align}
By (\ref{epsi-G}) and (\ref{sim-ch2-1}),
inequality (\ref{G-1-eq1-4}) implies that 
\begin{align}\relabel{G-1-eq1-5}
\left (\epsilon(G,x)-\epsilon(Q,x)\right )(-1)^nP(G,x)
> 0,\quad \forall x<0.
\end{align}
Since $(-1)^nP(G,x)>0$ holds for all $x<0$,
inequality (\ref{G-1-eq1-5}) implies 
Claim 4.

As Claim 4 contradicts the assumption of $G$,
there are no counter-examples to this result and
the theorem is proved.
\qed

\section{An approach for proving
Theorem~\ref{compare-K} \relabel{SecondIn}}

In this section, we will mainly show that, in order
to prove Theorem~\ref{compare-K},
it suffices to prove Theorem \ref{average-th}.
By (\ref{sim-ch2-1}),
we have
\begin{align}\relabel{Kn-epsi}
\epsilon(K_n,x)=\sum\limits_{i=0}^{n-1}\frac{1}{x-i}.
\end{align}
Thus,
\begin{align}\relabel{Kn-epsi2}
\epsilon(K_n,x)-\epsilon(G,x)
=\frac{(-1)^n}{P(G,x)}
\left (
(-1)^{n}P(G, x)\sum_{i=0}^{n-1}\frac {1}{x-i}
+(-1)^{n+1}P'(G, x)\right ).
\end{align}

For any graph $G$ of order $n$, define
\begin{align}
\xi(G, x)=(-1)^{n}P(G, x)\sum_{i=0}^{n-1}\frac {1}{x-i}+(-1)^{n+1}P'(G, x).
\relabel{xi}
\end{align}
Note that $\xi(G,x)\equiv 0$ if $G$ is a complete graph.
For any non-complete graph $G$ and any $x<0$,
we have $(-1)^nP(G,x)>0$ and so (\ref{Kn-epsi2})
implies that
$\epsilon(K_n,x)-\epsilon(G,x)>0$
if and only if $\xi(G,x)> 0$.

\begin{prop}\relabel{compare-K-eq}
Theorem~\ref{compare-K} holds
if and only if
$\xi(G,x)> 0$ holds for every non-complete graph $G$
and all $x<0$.
\end{prop}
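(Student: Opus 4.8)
The plan is to read the claimed equivalence directly off identity~(\ref{Kn-epsi2}) together with the elementary fact that $(-1)^nP(G,x)>0$ for all $x<0$; no idea beyond bookkeeping is required here, since all the substantive content is pushed into the later claim that $\xi(G,x)>0$.

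First I would note that for any graph $G$ of order $n$ the function $\epsilon(G,x)=P'(G,x)/P(G,x)$ is well defined for every $x<0$, because $(-1)^nP(G,x)>0$ there forces $P(G,x)\neq 0$. Rewriting~(\ref{Kn-epsi2}) by means of the definition~(\ref{xi}) of $\xi(G,x)$, and using $(-1)^n/P(G,x)=1/\big((-1)^nP(G,x)\big)$, one obtains the identity
\begin{align}\label{prop-eps-xi}
\epsilon(K_n,x)-\epsilon(G,x)=\frac{\xi(G,x)}{(-1)^nP(G,x)}\qquad\text{for all }x<0.
\end{align}
Since the denominator $(-1)^nP(G,x)$ is strictly positive when $x<0$, the left-hand side of~(\ref{prop-eps-xi}) is positive if and only if $\xi(G,x)>0$.

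With~(\ref{prop-eps-xi}) the two implications of the proposition are immediate. If $\xi(H,x)>0$ holds for every non-complete graph $H$ and all $x<0$, then taking any non-complete graph $G$, of order $n$ say, and any $x<0$, the identity gives $\epsilon(K_n,x)-\epsilon(G,x)>0$, i.e.\ $\epsilon(G,x)<\epsilon(K_n,x)$, which is precisely the assertion of Theorem~\ref{compare-K}. Conversely, if Theorem~\ref{compare-K} holds, then for every non-complete graph $G$ of order $n$ and every $x<0$ the left-hand side of~(\ref{prop-eps-xi}) is positive, so multiplying by $(-1)^nP(G,x)>0$ gives $\xi(G,x)>0$. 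Hence the two statements are equivalent.

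I do not expect a genuine obstacle in this proposition itself: the only points needing care are that $\epsilon(G,x)$ is meaningful on $(-\infty,0)$ and that $\epsilon(K_n,x)$ has the closed form~(\ref{Kn-epsi}) used to derive~(\ref{Kn-epsi2}), the latter following from Proposition~\ref{sim-epsi} applied along a perfect elimination ordering of $K_n$ (every vertex of $K_n$ being simplicial). The real difficulty is merely relocated: after this reduction one still has to establish $\xi(G,x)>0$ for all non-complete $G$ and all $x<0$, and the next step will be to reduce that, in turn, to the inequality of Theorem~\ref{average-th}.
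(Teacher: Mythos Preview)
Your proposal is correct and follows exactly the route the paper takes: the paper establishes the equivalence in the sentence immediately preceding the proposition, observing that $(-1)^nP(G,x)>0$ for $x<0$ together with identity~(\ref{Kn-epsi2}) gives $\epsilon(K_n,x)-\epsilon(G,x)>0$ if and only if $\xi(G,x)>0$. Your write-up simply unpacks this one-line argument with more care, making the identity $\epsilon(K_n,x)-\epsilon(G,x)=\xi(G,x)/\big((-1)^nP(G,x)\big)$ explicit and spelling out both directions separately.
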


It can be easily verified that $\xi(G,x)>0$ holds
for all non-complete graphs $G$ of order at most $3$
and all $x<0$.
For the general case, we will prove it by induction.
In the rest of this section,
we will find a relation between $\xi(G,x)$ and $\xi(G-u,x)$
for a vertex $u$ in $G$ in two cases.
Lemma~\ref{ud0} is for the case when $u$ is a simplicial vertex and 
Lemma~\ref{rec2} when $d(u) \ge 1$.
We then explain why
Theorem~\ref{average-th} implies 
$\xi(G,x)>0$ for all non-complete graphs $G$
and all $x<0$.

\begin{lemma}\relabel{ud0}
Let $G$ be a graph of order $n$.
If $u$ is a simplicial vertex of $G$ with $d(u)=d$, then
\begin{align}\relabel{ud0-eq1}
\xi(G, x)=(d-x)\xi(G-u, x)
+\frac{(-1)^{n-1}(n-1-d)P(G-u,x)}{n-1-x}.
\end{align} 
\end{lemma}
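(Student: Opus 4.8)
Since $u$ is isolated in $G$, we have the two basic identities $P(G,x)=x\,P(G-u,x)$ and $P'(G,x)=P(G-u,x)+x\,P'(G-u,x)$, obtained by differentiating the first. The plan is to substitute these directly into the definition~(\ref{xi}) of $\xi(G,x)$ with $|V(G)|=n$, so that $|V(G-u)|=n-1$, and then separate off the term in the sum $\sum_{i=0}^{n-1}\frac{1}{x-i}$ corresponding to $i=n-1$, since $\xi(G-u,x)$ only involves $\sum_{i=0}^{n-2}\frac{1}{x-i}$.

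Concretely, I would write
\begin{align}
\xi(G,x)
&=(-1)^n x\,P(G-u,x)\sum_{i=0}^{n-1}\frac{1}{x-i}
+(-1)^{n+1}\bigl(P(G-u,x)+x\,P'(G-u,x)\bigr)\nonumber\\
&=(-x)\left((-1)^{n-1}P(G-u,x)\sum_{i=0}^{n-2}\frac{1}{x-i}
+(-1)^{n}P'(G-u,x)\right)\nonumber\\
&\qquad +(-1)^n x\,P(G-u,x)\cdot\frac{1}{x-(n-1)}
+(-1)^{n+1}P(G-u,x).\nonumber
\end{align}
The bracketed expression on the second line is exactly $\xi(G-u,x)$ (note the order shift: $(-1)^n=(-1)^{(n-1)+1}$), so it contributes the term $(-x)\xi(G-u,x)$. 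It then remains to simplify the leftover terms
$$
(-1)^n P(G-u,x)\left(\frac{x}{x-(n-1)}-1\right)
=(-1)^n P(G-u,x)\cdot\frac{x-(x-(n-1))}{x-(n-1)}
=(-1)^n P(G-u,x)\cdot\frac{n-1}{x-(n-1)},
$$
which equals $\dfrac{(-1)^{n-1}(n-1)P(G-u,x)}{n-1-x}$ after pulling the sign out of the denominator. This is precisely the claimed second term on the right-hand side of~(\ref{ud0-eq1}), completing the proof.

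This argument is entirely a routine algebraic manipulation, so I do not anticipate any real obstacle; the only point requiring mild care is bookkeeping the parity of the signs $(-1)^n$ versus $(-1)^{n-1}$ when matching the residual sum to the definition of $\xi(G-u,x)$, and correctly isolating the $i=n-1$ term from the summation. Everything else follows by substituting the two product identities for $P(G,x)$ and $P'(G,x)$ and collecting terms.
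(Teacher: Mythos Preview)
Your proof is correct and follows essentially the same approach as the paper's own proof: both substitute $P(G,x)=xP(G-u,x)$ and its derivative into the definition of $\xi(G,x)$, separate off the $i=n-1$ term of the sum to recognise $(-x)\xi(G-u,x)$, and then simplify the remaining two terms to obtain the claimed second summand. Your bookkeeping of the signs and the isolation of the extra term are carried out correctly.
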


\begin{proof}
As $u$ is a simplicial vertex of $G$ with $d(u)=d$,
$P(G,x)=(x-d)P(G-u,x)$ by (\ref{sim-ch}).
Thus
$
P'(G, x)=P(G-u, x)+(x-d)P'(G-u, x).
$
By (\ref{xi}),
\begin{eqnarray}
\xi(G,x)
&=&(-1)^n (x-d)P(G-u,x)\sum_{i=0}^{n-1}\frac 1{x-i}
+(-1)^{n+1}(P(G-u, x)+(x-d)P'(G-u, x))\nonumber \\
&=&(d-x)\xi(G-u,x)+\frac{(-1)^n(x-d)P(G-u,x)}{x-n+1}
+(-1)^{n+1}P(G-u, x)\nonumber \\
&=&(d-x)\xi(G-u, x)+
\frac{(-1)^{n-1}(n-1-d)P(G-u,x)}{n-1-x}.
\end{eqnarray}
\end{proof}

Note that
$d\le n-1$ and $(-1)^{n-1} P(G-u,x)>0$ holds for all $x<0$,
implying that the second term in the right-hand side of
(\ref{ud0-eq1}) is non-negative.
Thus, if $u$ is a simplicial vertex of $G$ and $x<0$,
by Lemma~\ref{ud0},
$\xi(G-u,x)>0$ implies that $\xi(G,x)>0$.

Now consider the case that
$u$ is a vertex in $G$ with $d(u)=d\ge 1$.
Assume that $N(u)=\{u_1,u_2,\ldots,u_d\}$.
For any $i=1, 2, \ldots, d-1$,
let $G_i$ denote the graph obtained from $G-u$ by adding
edges joining $u_i$ to $u_j$ whenever $u_iu_j\notin E(G)$
for all $j$ with $i+1\le j\le d$.
Thus, $u_i$ is adjacent to $u_j$ in $G_i$ for all $j$ 
with $i+1\le j\le d$.
In the case that $u$ is a simplicial vertex of $G$, 
$G_i\cong G-u$ for all $i=1,2,\cdots,d-1$.
By applying the deletion-contraction formula
for chromatic polynomials
(see \cite{DKT2005,Read1968}),
$P(G,x)$ can be expressed in terms of $P(G-u,x)$ and $P(G_i,x)$ for
$i=1,2,\cdots,d-1$.

\begin{lemma}\relabel{rec0}
Let $u$ be a vertex in $G$ with $d(x)=d\ge 1$
and for $i=1,2,\cdots,d-1$, let $G_i$ be the graph defined above. Then,
\begin{align}
P(G, x)=(x-1)P(G-u, x)-\sum_{i=1}^{d-1}P(G_i, x).
\relabel{rec1}
\end{align}
\end{lemma}

\begin{proof}
For $1\le i\le d$,
let $E_i$ denote the set of edges $uu_j$ in $G$ for $j=1,2,\cdots,i-1$.
So $|E_i|=i-1$ and $E_1=\emptyset$.
For any $i$ with $1\le i\le d-1$,
applying the deletion-contraction formula
for chromatic polynomials to edge $uu_i$ in $G-E_i$,
the graph obtained from $G$ by removing
all edges in $E_i$,
we have
\begin{align}
P(G-E_i, x)=P(G-E_{i+1}, x)-P((G-E_i)\slash uu_i, x)
=P(G-E_{i+1}, x)-P(G_i, x),
\relabel{rec0-1}
\end{align}
where the last equality follows from the fact that
$(G-E_i)\slash uu_i\cong G_i$
by the assumption of $G_i$.
Thus, by (\ref{rec0-1}),
\begin{align}
P(G,x)=P(G-E_1,x)=P(G-E_d,x)-\sum_{i=1}^{d-1}P(G_i,x).
\relabel{rec0-2}
\end{align}
As $u$ is of degree $1$ in $G-E_d$,
$P(G-E_d,x)=(x-1)P(G-u,x)$.
Hence  (\ref{rec1}) follows.
\end{proof}

\begin{lemma}\relabel{rec2}
Let $G$ be a graph of order $n$ and let $u$ be
a vertex of $G$ with $d(u)=d\ge 1$. Then
\begin{align}\relabel{rec2-eq1}
\xi(G, x)=(1-x)\xi(G-u, x)+\sum_{i=1}^{d-1}\xi(G_i, x)
+\frac{(-1)^{n}\left[(x-n+1)P(G-u, x)-P(G, x)\right]}{n-x-1},
\end{align}
where $G_1,\ldots,G_{d-1}$ are graphs defined above.
\end{lemma}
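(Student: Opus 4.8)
The plan is to prove Lemma~\ref{rec2} by a direct computation starting from the deletion--contraction identity (\ref{rec1}), tracking carefully the fact that $G$ has order $n$ whereas $G-u$ and each $G_i$ have order $n-1$. First I would differentiate (\ref{rec1}), obtaining $P'(G,x)=P(G-u,x)+(x-1)P'(G-u,x)-\sum_{i=1}^{d-1}P'(G_i,x)$, so that both $P(G,x)$ and $P'(G,x)$ are expressed through the chromatic polynomials of $G-u$ and of the $G_i$'s together with their derivatives. Substituting these two expressions into the definition (\ref{xi}) of $\xi(G,x)$ then writes $\xi(G,x)$ as a linear combination of $P(G-u,x)$, $P'(G-u,x)$, $P(G_i,x)$, $P'(G_i,x)$ whose coefficients involve the sum $\sum_{i=0}^{n-1}\frac{1}{x-i}$.

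The key manipulation is to isolate in that sum its last term, writing $\sum_{i=0}^{n-1}\frac{1}{x-i}=\left(\sum_{i=0}^{n-2}\frac{1}{x-i}\right)+\frac{1}{x-n+1}$, since the truncated sum $\sum_{i=0}^{n-2}\frac{1}{x-i}$ is precisely the one appearing in $\xi(G-u,x)$ and in each $\xi(G_i,x)$, all of these graphs having order $n-1$. Collecting the $P(G-u,x)$- and $P'(G-u,x)$-terms that come with the truncated sum reproduces $(1-x)\xi(G-u,x)$, and collecting the $P(G_i,x)$- and $P'(G_i,x)$-terms reproduces $\sum_{i=1}^{d-1}\xi(G_i,x)$. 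What is left over is the $\frac{1}{x-n+1}$-part of each coefficient, together with the extra $(-1)^{n+1}P(G-u,x)$ produced when differentiating the factor $(x-1)P(G-u,x)$.

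Finally I would simplify this leftover. All of its terms carry a factor $\frac{(-1)^n}{x-n+1}$ except the lone $(-1)^{n+1}P(G-u,x)$; placing everything over the common denominator $x-n+1$ leaves exactly $(x-1)P(G-u,x)-\sum_{i=1}^{d-1}P(G_i,x)$ in the numerator, and a second application of (\ref{rec1}) collapses this to $P(G,x)$. After using $x-n+1=-(n-x-1)$ to fix the sign, the leftover becomes precisely $\frac{(-1)^n\left[(x-n+1)P(G-u,x)-P(G,x)\right]}{n-x-1}$, which is the displayed remainder term, completing the proof.

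I do not anticipate a genuine obstacle, since the statement is an algebraic identity rather than an inequality. The only points demanding care are the off-by-one between the sum $\sum_{i=0}^{n-1}\frac{1}{x-i}$ attached to the order-$n$ graph $G$ and the sum $\sum_{i=0}^{n-2}\frac{1}{x-i}$ attached to the order-$(n-1)$ graphs $G-u$ and $G_i$, and keeping the signs $(-1)^n$ versus $(-1)^{n+1}$ straight throughout; everything else is routine bookkeeping.
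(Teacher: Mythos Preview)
Your proposal is correct and follows essentially the same route as the paper's own proof: differentiate (\ref{rec1}), substitute into the definition (\ref{xi}) of $\xi(G,x)$, split off the term $\frac{1}{x-n+1}$ from $\sum_{i=0}^{n-1}\frac{1}{x-i}$ to rebuild $(1-x)\xi(G-u,x)$ and $\sum_{i=1}^{d-1}\xi(G_i,x)$, and then use (\ref{rec1}) once more to collapse the leftover into the displayed remainder term. The only caution is that when you ``place everything over the common denominator $x-n+1$'' the numerator is not literally $(x-1)P(G-u,x)-\sum_{i=1}^{d-1}P(G_i,x)$ alone but also contains the contribution $-(x-n+1)P(G-u,x)$ coming from the lone $(-1)^{n+1}P(G-u,x)$; with that bookkeeping in place your simplification goes through exactly as in the paper.
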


\begin{proof}
By (\ref{rec1}), we have
\begin{align}\relabel{rec2-eq0}
P'(G, x)=P(G-u, x)+(x-1)P'(G-u, x)-\sum_{i=1}^{d-1}P'(G_i, x).
\end{align}
Thus
\begin{eqnarray}
\ \xi(G, x) & = & (-1)^{n}P(G, x)
\sum_{j=0}^{n-1}\frac {1}{x-j}+(-1)^{n+1}P'(G, x) \nonumber \\
& = & (-1)^{n}\left[(x-1)P(G-u, x)-\sum_{i=1}^{d-1}P(G_i, x)\right]\sum_{j=0}^{n-1}\frac {1}{x-j} \nonumber \\
&   & +(-1)^{n+1}\left[P(G-u, x)+(x-1)P'(G-u, x)-\sum_{i=1}^{d-1}P'(G_i, x)\right] \nonumber \\
& = & (1-x)\left[(-1)^{n-1}P(G-u, x)
\sum_{j=0}^{n-2}\frac {1}{x-j}
+(-1)^{n}P'(G-u, x)\right]  \nonumber \\
&   & +\sum_{i=1}^{d-1}\left[(-1)^{n-1}P(G_i, x)
\sum_{j=0}^{n-2}\frac {1}{x-j}
+(-1)^{n}P'(G_i, x)\right] +(-1)^{n+1}P(G-u, x)
\nonumber \\
&   & +(-1)^{n}\left[\frac{(x-1)P(G-u,x)}{x-(n-1)}
      -\frac{1}{x-(n-1)}\sum_{i=1}^{d-1} P(G_i,x)\right]
\nonumber 
\end{eqnarray}

\begin{eqnarray}
& = & (1-x)\xi(G-u, x)+\sum_{i=1}^{d-1}\xi(G_i, x)
\nonumber \\ & &
+\frac{(-1)^{n}\left[(x-n+1)P(G-u, x)
-P(G, x)\right]}{n-x-1},
\relabel{rec2-eq2}
\end{eqnarray}
where the last expression follows from (\ref{rec1})
and the definitions
of $\xi(G-u, x)$ and $\xi(G_i,x)$.
The result then follows.
\end{proof}

It is known that $\xi(G,x)>0$ holds for all
non-complete graphs $G$ of order at most $3$ and all $x<0$.
For any non-complete graph $G$ of order $n\ge 4$,
by Lemma~\ref{ud0}, 
$\xi(G-u,x)>0$ implies $\xi(G,x)>0$ 
for each simplicial vertex $u$ in $G$ 
and all $x<0$;
by Lemma~\ref{rec2},
for any $x<0$,
$\xi(G-u,x)>0$ implies $\xi(G,x)>0$
whenever $u$ is an non-isolated vertex in $G$ 
satisfying the following 
inequality:
\begin{align} \relabel{ineq2}
(-1)^{n}((x-n+1)P(G-u, x)-P(G, x))> 0.
\end{align}  
Note that the left-hand side of (\ref{ineq2})
vanishes when $G$ is $K_n$.
Also notice that there exist
non-complete graph $G$ and some vertex $u$ in $G$ 
such that inequality (\ref{ineq2}) does not hold
for some $x<0$.
For example, if $G$ is the complete bipartite graph $K_{2,3}$
and $u$ is a vertex of degree $3$ in $G$, then (\ref{ineq2}) fails for all real $x$ with $-2.3<x<0$.
However,
to prove that for any $x<0$, there exists some vertex $u$ in $G$
such that inequality (\ref{ineq2}) holds,
it suffices to prove the following inequality 
(i.e., Theorem~\ref{average-th}):
\begin{align}
(-1)^n (x-n+1)\sum_{u\in V}P(G-u, x)
+(-1)^{n+1}nP(G, x)
> 0 \relabel{average}
\end{align}
for any non-complete graph $G=(V,E)$ of order $n$ and all $x<0$.

By Proposition~\ref{compare-K-eq} and
inequality (\ref{ineq2}),
to prove Theorem~\ref{compare-K},
we can now just focus on
proving inequality~(\ref{average})
(i.e., Theorem~\ref{average-th}).
The proof of Theorem~\ref{average-th}
will be given in Section~\ref{finalproof}
based on the interpretations for the coefficients
of chromatic polynomials introduced in
Section~\ref{interp}.

\section{Combinatorial interpretations for
coefficients of $P(G,x)$\relabel{interp}}

Let $G=(V,E)$ be any graph.
In this section, we will introduce
Greene $\&$ Zaslavsky's
combinatorial interpretation in \cite{GZ1983}  
for the coefficients of
$P(G,x)$ in terms of acyclic orientations.
The result will be applied in the next section to prove 
Theorem~\ref{average-th}.

An orientation $D$ of $G$ is called {\it acyclic}
if $D$ does not contain any directed cycle.
Let $\alpha (G)$ be the number of acyclic
orientations of a graph $G$.
In~\cite{Stanley1973}, Stanley
gave a nice combinatorial interpretation
of $(-1)^n P(G, -k)$ for any positive integer $k$
in terms of acyclic
orientations of $G$. In particular, he proved:

\begin{theorem}
[\cite{Stanley1973}]\relabel{Stanley}
For any graph $G$ of order $n$,
$(-1)^nP(G, -1)=\alpha(G)$,
i.e.,
\begin{align}\label{Stanley-eq1}
\sum\limits_{i=1}^{n}a_i(G)=\alpha(G).
\end{align}
\end{theorem}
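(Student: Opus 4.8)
The statement is Stanley's theorem, and the plan is to prove it by deletion--contraction, showing that the two quantities involved satisfy the same recurrence with the same initial conditions. First I would dispose of the ``i.e.'': evaluating $P(G,x)=\sum_{i=1}^n(-1)^{n-i}a_i(G)x^i$ at $x=-1$ gives $P(G,-1)=(-1)^n\sum_{i=1}^n a_i(G)$, so $(-1)^nP(G,-1)=\sum_{i=1}^n a_i(G)$ and the two displayed equalities are literally the same; hence it suffices to prove $(-1)^nP(G,-1)=\alpha(G)$. Set $\bar P(G)=(-1)^{|V(G)|}P(G,-1)$. For the edgeless graph $\overline{K_n}$ we have $P=x^n$, so $\bar P(\overline{K_n})=1=\alpha(\overline{K_n})$. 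For any (loopless) graph $G$ of order $n$ with an edge $e$, the deletion--contraction identity $P(G,x)=P(G-e,x)-P(G/e,x)$ together with $|V(G/e)|=n-1$ yields $\bar P(G)=\bar P(G-e)+\bar P(G/e)$; here parallel edges produced by contraction may be merged, since this changes neither $P$ nor $\alpha$. Thus $\bar P$ is determined by the recurrence $F(G)=F(G-e)+F(G/e)$ and the normalization $F(\overline{K_n})=1$, and it remains to verify that $\alpha$ obeys the same two rules.

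The content is therefore the identity $\alpha(G)=\alpha(G-e)+\alpha(G/e)$ for $e=uv$. I would consider the map that deletes $e$ from an acyclic orientation of $G$; it lands among the acyclic orientations of $G-e$ and is surjective, because a given acyclic orientation $D$ of $G-e$ cannot contain both a directed $u$-to-$v$ path and a directed $v$-to-$u$ path (they would form a directed cycle), so at least one of the two orientations of $e$ extends $D$ acyclically. The fibre over $D$ has size $2$ precisely when $D$ contains no directed path between $u$ and $v$ in either direction. I would then exhibit a bijection between such $D$ and the acyclic orientations of $G/e$: identifying $u,v$ to a single vertex $w$ transports $D$ to an orientation of $G/e$, which is acyclic because a directed cycle through $w$ would give a directed $u$--$v$ path in $D$ while one avoiding $w$ would already be a cycle in $D$; conversely an acyclic orientation of $G/e$ lifts uniquely along the incidence data of $G-e$ (each edge remembers whether it met $u$ or $v$), and the lift is acyclic and free of directed $u$--$v$ paths by the same projection argument. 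Summing fibre sizes gives $\alpha(G)=\alpha(G-e)+\alpha(G/e)$, and $\alpha(\overline{K_n})=1$ is clear.

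With both recurrences and base cases established, induction on $|E(G)|$ gives $\alpha(G)=\bar P(G)=(-1)^nP(G,-1)=\sum_{i=1}^n a_i(G)$, as claimed. The only genuinely delicate step is the bijection in the second paragraph --- specifically checking that the lift of an acyclic orientation of $G/e$ is acyclic and has no directed $u$--$v$ path; this needs a short case analysis on how a hypothetical directed cycle in $G-e$ meets $\{u,v\}$ (through neither vertex, through exactly one, or through both), where in each case the cycle projects to a nontrivial closed directed walk in $G/e$, contradicting acyclicity there. Everything else is bookkeeping with the deletion--contraction identity and the sign $(-1)^n$.
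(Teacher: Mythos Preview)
Your deletion--contraction argument is correct and is essentially Stanley's original proof; the one point worth tightening is the well-definedness of the projection $D\mapsto D/e$ when $G-e$ contains both $ux$ and $vx$ for some $x$, but your ``no directed $u$--$v$ path'' hypothesis forces those two edges to be oriented consistently, so this is fine.

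However, there is nothing to compare against: the paper does \emph{not} prove this theorem. It is stated with a citation to \cite{Stanley1973} and used as a black box (together with the Greene--Zaslavsky interpretation of $a_1$) to build the combinatorial interpretations of Section~\ref{interp}. So your proposal is a valid proof of a result the authors simply quote.
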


In a digraph $D$, any vertex of $D$
with in-degree (resp. out-degree) zero is called a
{\it source} (resp. {\it sink}) of $D$.
It is well known that
any acyclic digraph has at least one
source and at least one sink.
If $v$ is an isolated vertex of $G$,
then $v$ is a source and also a sink in
any orientation of $G$.

For any $v\in V$,
let $\alpha(G, v)$ be the number of acyclic
orientations of $G$ with $v$ as its
unique source.
Clearly $\alpha(G, v)=0$ if and only if $G$ is
not connected.
In 1983, Greene and Zaslavsky \cite{GZ1983}
showed that $a_1(G)=\alpha(G, v)$.

\begin{theorem}[\cite{GZ1983}]\relabel{source}
For any graph $G=(V,E)$,
$a_1(G)=\alpha(G,v)$ holds for every $v\in V$.
\end{theorem}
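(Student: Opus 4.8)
The plan is to prove Theorem~\ref{source} by induction on the number of edges of $G$, showing that $\alpha(G,v)$ satisfies the same base values and the same deletion--contraction recursion as the coefficient $a_1(G)$. For a graph $H$ and a vertex $w$, write $\mathrm{AO}(H,w)$ for the set of acyclic orientations of $H$ in which $w$ is the unique source, so $\alpha(H,w)=|\mathrm{AO}(H,w)|$. First I would dispose of the disconnected case: if some component of $G$ avoids $v$, then every acyclic orientation of $G$ has a source in that component, distinct from $v$, so $\mathrm{AO}(G,v)=\emptyset$; meanwhile $P(G,x)$ is a product of the chromatic polynomials of the (at least two) components of $G$, each divisible by $x$, whence $x^2\mid P(G,x)$ and $a_1(G)=0$ as well. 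So assume $G$ connected. The base case $G\cong K_1$ is immediate, since $P(K_1,x)=x$ and the empty orientation of $K_1$ has $v$ as unique source: both sides equal $1$.

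For the inductive step let $G$ be connected with at least one edge. If every edge of $G$ is incident with $v$, then connectedness forces $G$ to be a star centred at $v$; then $\mathrm{AO}(G,v)$ consists of the single orientation directing all edges out of $v$, while expanding $P(K_{1,m},x)=x(x-1)^m$ gives $a_1(G)=1$, so again both sides equal $1$. Otherwise choose an edge $e=xy$ with $x,y\neq v$. Then $v$ survives in both $G-e$ and $G/e$ (the latter understood as a simple graph, which affects neither $\alpha$ nor $a_1$), and both have fewer edges than $G$, so by the induction hypothesis $\alpha(G-e,v)=a_1(G-e)$ and $\alpha(G/e,v)=a_1(G/e)$. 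Comparing coefficients of $x^1$ on the two sides of $P(G,x)=P(G-e,x)-P(G/e,x)$ gives $a_1(G)=a_1(G-e)+a_1(G/e)$, so the proof reduces to the companion identity $\alpha(G,v)=\alpha(G-e,v)+\alpha(G/e,v)$.

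To prove this identity I would argue bijectively. Given $D\in\mathrm{AO}(G,v)$, the orientation $D-e$ of $G-e$ is acyclic, and since both ends of $e$ differ from $v$ and hence have in-degree at least one in $D$, the only vertex of $D-e$ that can be a new source is the head of $e$ --- and it is one exactly when $e$ was its only in-edge in $D$. If $v$ is still the unique source of $D-e$, a short check shows that for a fixed $D'\in\mathrm{AO}(G-e,v)$ there is exactly one such $D$ restricting to $D'$ when $x,y$ are comparable in $D'$ and exactly two when they are incomparable; these $D$ thus account for $\alpha(G-e,v)$ plus the number of $D'\in\mathrm{AO}(G-e,v)$ with $x,y$ incomparable. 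If instead $D-e$ has a second source, then the head of $e$ had $e$ as its only in-edge in $D$, which rules out any directed path of $D$ joining the ends of $e$ other than $e$ itself; consequently contracting $e$ creates no directed cycle, $D/e$ lies in $\mathrm{AO}(G/e,v)$, and $D\mapsto D/e$ is injective on these $D$. It remains to see that the number of $D'\in\mathrm{AO}(G-e,v)$ with $x,y$ incomparable, plus the number of $D\in\mathrm{AO}(G,v)$ for which $D-e$ has a second source, is exactly $\alpha(G/e,v)$; this I would get by running the correspondence backwards, splitting the contracted vertex of a member of $\mathrm{AO}(G/e,v)$: this always returns an orientation of $G-e$ in which $x,y$ are incomparable, and either it keeps $v$ as unique source (recovering the first family after re-contracting $x,y$) or it has exactly one extra source (recovering the second after reinserting $e$ pointed at that source). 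Adding the two counts gives the companion identity and closes the induction.

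The main obstacle is the third paragraph --- specifically, the careful bookkeeping of sources under deletion of $e$, contraction of $e$, and vertex splitting. The only genuinely non-routine point is that contracting $e$ does not create a directed cycle, which holds precisely because the newly created source had $e$ as its sole in-edge; once that is in place, the one-versus-two count of restrictions and the fact that vertex splitting always produces incomparable ends are elementary but must be checked without slips. (Alternatively, the identity is equivalent to the classical statement that $T_G(1,0)$ counts the acyclic orientations of a connected graph with any prescribed unique source, but establishing that requires essentially the same combinatorics.)
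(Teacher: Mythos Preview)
The paper does not supply its own proof of this theorem; it quotes the result from Greene and Zaslavsky~\cite{GZ1983}, notes that the original argument used hyperplane arrangements, and refers the reader to Gebhard and Sagan~\cite{GS2000} for three further proofs. So there is no in-paper proof to compare against. Your proposal is essentially the deletion--contraction approach, which is one of the standard elementary routes (and close in spirit to one of the proofs in~\cite{GS2000}): the overall strategy---dispose of the disconnected case, handle the star centred at $v$ directly, then pick an edge $e$ not incident with $v$ and show that both $a_1(\,\cdot\,)$ and $\alpha(\,\cdot\,,v)$ satisfy the same additive recursion under deletion and contraction of $e$---is correct and does yield a complete proof.

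The bijective bookkeeping you sketch in the third paragraph is also correct, but a few points deserve to be spelled out when you write it in full. First, a notational clash: you use $x$ for both the chromatic-polynomial variable and an endpoint of $e$; rename the endpoints. Second, when the endpoints of $e$ share a common neighbour $w$, contraction produces parallel edges, and you should check explicitly that in each of your two families the orientations of the two edges to $w$ are forced to agree (incomparability of the endpoints in the first family, and the head of $e$ having $e$ as its sole in-edge in the second, each imply this), so that passing to the underlying simple graph loses no information and your maps are genuinely inverse to vertex splitting. Third, when you split the contracted vertex of some $D''\in\mathrm{AO}(G/e,v)$, you should note that the contracted vertex was not a source of $D''$, hence after splitting at most one of the two endpoints can be a source; this is exactly what guarantees the split lands in precisely one of your two families. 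These are expository refinements rather than gaps.
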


This theorem was proved originally by using the
theory of hyperplane arrangements. See \cite{GS2000}
for three other nice proofs.

By Whitney's Broken-cycle Theorem
(i.e., Theorem~\ref{brokencycle}), $a_i(G)$
equals the number of spanning subgraphs of $G$ with
$i$ components and $n-i$ edges, containing no broken
cycles of $G$. In particular, $a_1(G)$ is the
number of spanning trees of $G$ containing no broken
cycles of $G$. Now we have two different combinatorial
interpretations for $a_1$.
For any $a_i(G)$, $2\leq i\leq n$,
its combinatorial interpretation can be obtained by 
applying these two different combinatorial
interpretations for $a_1$.

Let $\mathcal{P}_i(V)$ be the set of
partitions $\{V_1,V_2,\ldots,V_i\}$ of $V$
such that $G[V_j]$ is connected for all $j=1,2,\ldots,i$
and let $\beta_i(G)$ be the number
of ordered pairs $(P_i, F)$, where
\begin{enumerate}
\item[(a)] $P_i=\{V_1,V_2,\ldots,V_i\}\in \mathcal{P}_i(V)$;
\item[(b)]
$F$ is a spanning forest of $G$ with exactly $i$ components
$T_1,T_2, \ldots, T_i$, where each $T_j$ is a spanning tree of
$G[V_j]$ containing no broken cycles of $G$.
\end{enumerate}

For any subgraph $H$ of $G$,
let $\widetilde{\tau}(H)$ be the number
of spanning trees of $H$ containing
no broken cycles of $G$.
By Theorem~\ref{brokencycle},
$\widetilde{\tau}(H)=a_1(H)$ holds
and the next result follows.

\begin{theorem}\relabel{interpre1}
For any graph $G$ and any $1\leq i\leq n$,
\begin{align}
a_i(G)=\beta_i(G)=\sum_{\{V_1,\ldots, V_i\}\in \mathcal{P}_i(V)}\prod_{j=1}^{i} \widetilde{\tau}(G[V_j]).
\end{align}
\end{theorem}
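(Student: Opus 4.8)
The plan is to read off both equalities directly from Whitney's Broken-cycle Theorem (Theorem~\ref{brokencycle}), together with one structural observation: a broken cycle, being a path, is connected, so it cannot be split across two components of a spanning forest. First I would unwind the definitions. By Theorem~\ref{brokencycle}, $a_i(G)$ is the number of spanning subgraphs of $G$ with $n-i$ edges and $i$ components containing no broken cycle of $G$, and any such subgraph is a spanning forest $F$ with components $T_1,\dots,T_i$ (on $n$ vertices, having $i$ components and $n-i$ edges forces every component to be a tree). So it suffices to exhibit a bijection between these broken-cycle-free spanning forests and the pairs $(P_i,F)$ counted by $\beta_i(G)$, and then to evaluate $\beta_i(G)$ by a product.

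For the bijection: given a broken-cycle-free spanning forest $F$ with components $T_1,\dots,T_i$, set $V_j=V(T_j)$. Then $\{V_1,\dots,V_i\}$ is a partition of $V$, each $G[V_j]\supseteq T_j$ is connected so the partition lies in $\mathcal P_i(V)$, each $T_j$ is a spanning tree of $G[V_j]$, and since $T_j\subseteq F$ it contains no broken cycle of $G$; hence $(\{V_1,\dots,V_i\},F)$ satisfies conditions (a) and (b). Conversely, given such a pair $(P_i,F)$, the subgraph $F$ has $\sum_{j=1}^i(|V_j|-1)=n-i$ edges and $i$ components, and it contains no broken cycle of $G$: a broken cycle is a path, so if it were a subgraph of $F$ its edges (and vertices) would all lie in one component $T_j$, contradicting (b). These two assignments are mutually inverse, which yields $a_i(G)=\beta_i(G)$.

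It remains to compute $\beta_i(G)$. Specifying a pair $(P_i,F)$ is the same as first choosing a partition $\{V_1,\dots,V_i\}\in\mathcal P_i(V)$ and then, independently within each block $j$, choosing a spanning tree of $G[V_j]$ that contains no broken cycle of $G$; the number of such trees in block $j$ is exactly $\widetilde\tau(G[V_j])$, so $\beta_i(G)=\sum_{\{V_1,\dots,V_i\}\in\mathcal P_i(V)}\prod_{j=1}^i\widetilde\tau(G[V_j])$. The one point I regard as the genuine subtlety — and the only step needing care — is the identity $\widetilde\tau(H)=a_1(H)$ for an induced subgraph $H=G[V_j]$: in the definition of $\widetilde\tau(H)$ the phrase ``broken cycle'' refers to $\eta$ on all of $E(G)$, whereas $a_1(H)$ involves broken cycles of $H$ itself. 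I would resolve this by noting that the restriction $\eta|_{E(H)}$, after an order-preserving relabelling onto $\{1,\dots,|E(H)|\}$, assigns to every cycle $C$ of $H$ the same $\eta$-minimum edge as $\eta$ does; hence the broken cycles of $H$ are precisely the broken cycles of $G$ that are subgraphs of $H$, and a spanning tree of $H$ avoids all broken cycles of $G$ if and only if it avoids all broken cycles of $H$. Applying Theorem~\ref{brokencycle} to $H$ then gives $\widetilde\tau(H)=a_1(H)$, and combining this with the displayed formula for $\beta_i(G)$ completes the proof.
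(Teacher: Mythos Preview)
Your proof is correct and follows essentially the same route as the paper: both equalities are read off directly from Whitney's broken-cycle theorem, the first via the obvious bijection between broken-cycle-free spanning forests with $i$ components and the pairs $(P_i,F)$, and the second immediately from the definitions of $\beta_i(G)$ and $\widetilde\tau$. One small remark on your final aside (which the theorem as stated does not actually require): your ``hence'' in the argument for $\widetilde\tau(H)=a_1(H)$ only yields that every broken cycle of $H$ is a broken cycle of $G$ lying in $H$; for the converse you must use that $H=G[V_j]$ is \emph{induced}, so that if a broken cycle $C-e$ of $G$ lies in $H$ then all vertices of $C$ lie in $V_j$, forcing $e\in E(H)$ and making $C$ a cycle of $H$.
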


Now let $V=\{1, 2, \ldots, n\}$.
For any $i:1\le i\le n$
and any vertex $v\in V$, let
$\mathcal{OP}_{i, v}(V)$ be the family of
 ordered partitions $(V_1,V_2,\ldots,V_i)$ of $V$
 such that

\begin{enumerate}
\item[(a)] $\{V_1,V_2,\ldots,V_i\}\in \mathcal{P}_i(V)$,
where $v\in V_1$;
\item[(b)] for $j=2, \ldots, i$, the minimum number
in the set $\bigcup_{j\le s\le i} V_s$ is within $V_j$.
\end{enumerate}
Clearly, for any $v\in V$ and
any $\{V_1,V_2,\ldots,V_i\}\in \mathcal{P}_i(V)$,
there is exactly one permutation
$(\pi_1,\pi_2,\ldots,\pi_i)$ of $1,2,\ldots,i$
such that $(V_{\pi_1}, V_{\pi_2},\ldots,V_{\pi_i})\in
\mathcal{OP}_{i, v}(V)$.

By Theorem \ref{source},
$\widetilde{\tau}(G[V_j])=\alpha(G[V_j],u)$ holds
for any vertex $u$ in $G[V_j]$ and
Theorem \ref{interpre1} is equivalent to 
a result in \cite{GZ1983} 
which we illustrate differently below.

\begin{theorem}[\cite{GZ1983}, Theorem 7.4]\relabel{interpre2}
For any $v\in V$ and any $1\leq i\leq n$,
\begin{align}
a_i(G)=\sum_{(V_1,\ldots, V_i)\in \mathcal{OP}_{i, v}(V)}\alpha(G[V_1],v) \prod_{j=2}^{i} \alpha(G[V_j],m_j),
\relabel{interpre}
\end{align}
where $m_j$ is the minimum number in $V_j$ for
$j=2,\ldots,i$.
\end{theorem}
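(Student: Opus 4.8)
The plan is to deduce Theorem~\ref{interpre2} directly from Theorem~\ref{interpre1}, using Greene and Zaslavsky's theorem (Theorem~\ref{source}) to convert each tree-count factor into an acyclic-orientation count, and using the canonical pairing between unordered partitions in $\mathcal{P}_i(V)$ and ordered partitions in $\mathcal{OP}_{i, v}(V)$ to match the two index sets. In fact the statement is a bookkeeping reformulation of Theorem~\ref{interpre1}, so the work is entirely in organizing that bookkeeping.

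First I would recall that, by Theorem~\ref{interpre1},
\[
a_i(G)=\sum_{\{V_1,\ldots,V_i\}\in\mathcal{P}_i(V)}\prod_{j=1}^i\widetilde{\tau}(G[V_j]),
\]
and that for every block $V_j$ of such a partition the graph $G[V_j]$ is connected, so Theorem~\ref{source} applies and gives $\widetilde{\tau}(G[V_j])=a_1(G[V_j])=\alpha(G[V_j],u)$ for \emph{every} choice of vertex $u\in V_j$. The freedom in this choice is what lets us pick, inside each summand, a canonical representative vertex for each block: for the unique block containing $v$ we take the representative $v$ itself, and for every other block $V_j$ we take the representative $m_j:=\min V_j$. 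Then I would reindex the sum using the bijection noted in the paragraph preceding the Example: for each $v\in V$, an unordered partition $\{V_1,\ldots,V_i\}\in\mathcal{P}_i(V)$ corresponds to the unique ordering in $\mathcal{OP}_{i, v}(V)$ in which the block containing $v$ comes first and, for $j=2,\ldots,i$, the block $V_j$ is the one containing $\min\bigcup_{s\ge j}V_s$. Under this bijection the block containing $v$ becomes the first coordinate, so its contribution, rewritten as above, is $\alpha(G[V_1],v)$, while for $j\ge 2$ the coordinate $V_j$ contributes $\alpha(G[V_j],m_j)$ with $m_j=\min V_j$. Hence $\prod_{j=1}^i\widetilde{\tau}(G[V_j])=\alpha(G[V_1],v)\prod_{j=2}^i\alpha(G[V_j],m_j)$, and summing over the bijection turns Theorem~\ref{interpre1} into formula~(\ref{interpre}).

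The only point requiring care — and the closest thing to an obstacle — is the legitimacy of the reindexing. One must check that condition~(b) in the definition of $\mathcal{OP}_{i, v}(V)$ determines the ordering of $V_2,\ldots,V_i$ uniquely: $\min(V\setminus V_1)$ pins down $V_2$, then $\min(V\setminus(V_1\cup V_2))$ pins down $V_3$, and so on, since the blocks are pairwise disjoint; and one must check that the two canonical choices of representative are consistent with membership, namely $v\in V_1$ by condition~(a) and $m_j\in V_j$ by definition, so that $\alpha(G[V_1],v)$ and $\alpha(G[V_j],m_j)$ are well defined. Once this bookkeeping is in place, the identity is immediate from Theorems~\ref{interpre1} and~\ref{source}, and no further combinatorial input is needed.
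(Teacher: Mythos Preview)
Your proof is correct and follows exactly the paper's approach: the paper derives Theorem~\ref{interpre2} from Theorem~\ref{interpre1} in a single sentence, invoking Theorem~\ref{source} to replace each $\widetilde{\tau}(G[V_j])$ by $\alpha(G[V_j],u)$ and implicitly using the bijection $\mathcal{P}_i(V)\to\mathcal{OP}_{i,v}(V)$ noted just before the Example. Your write-up simply spells out that bookkeeping more carefully.
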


Note that the theorem above indicates that the
right hand side of (\ref{interpre}) is independent
of the choice of $v$. Thus, for any $1\le i\le n$,
\begin{align}
na_i(G)=\sum_{v\in V}
\sum_{(V_1,\ldots, V_i)\in \mathcal{OP}_{i, v}(V)}
\alpha(G[V_1],v) \prod_{j=2}^{i} \alpha(G[V_j],m_j).
\relabel{interpre-n}
\end{align}

Let $P^{(i)}(G, x)$ be the $i$-th derivative of $P(G,x)$.  
Very recently, 
Bernardi and Nadeau \cite{Bernardi2020} gave an interpretation of $P^{(i)}(G, -j)$ 
for any nonnegative integers $i$ and $j$
in terms of acyclic orientations.
When $i=0$,  their result is exactly
Theorem \ref{Stanley} due to Stanley~\cite{Stanley1973};
and when $j=0$, it is Theorem \ref{interpre2} due to 
Greene $\&$ Zaslavsky~\cite{GZ1983}.

\section{Proofs of Theorems \ref{average-th} and \ref{compare-K}
\relabel{finalproof}}

By the explanation in Section 3, 
to prove Theorem~\ref{compare-K}, 
it suffices to prove Theorem~\ref{average-th}.
In this section, we will
prove Theorem~\ref{average-th} by
showing that
the coefficient of
$x^i$ in the expansion of the left-hand side of (\ref{right-2})
in Theorem~\ref{average-th}
is of the form $(-1)^i d_i$
with $d_i\ge 0$
for all $i=1,2,\ldots,n$.
Furthermore,
$d_i>0$ holds for some $i$ when $G$ is not complete.

We first establish the following result.

\begin{lemma}\relabel{le5-1}
Let $G=(V,E)$ be a non-complete graph of order
$n\geq 3$
and component number $c$.
\begin{enumerate}
\renewcommand{\theenumi}{\rm (\alph{enumi})}
\item If $c=1$ and $G$ is not the $n$-cycle $C_n$,
then there exist non-adjacent vertices $u_1,u_2$
of $G$ such that $G-\{u_1,u_2\}$ is connected.
\item If $2\le c\le n-1$,
then for any integer $i$ with $c\le i\le n-1$,
there exists a partition $V_1,V_2,\ldots,V_i$ of $V$
such that $G[V_j]$ is connected for all $j=2,\ldots,i$
and $G[V_1]$ has exactly two components
one of which is an isolated vertex.
\end{enumerate}
\end{lemma}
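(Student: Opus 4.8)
The plan is to prove the two parts separately, using connectivity arguments on a spanning tree (for part (i)) and a greedy partitioning scheme (for part (ii)). For part (i), since $G$ is connected, it has a spanning tree $T$. If $T$ has two distinct leaves $u_1,u_2$ that are non-adjacent in $G$, then $G-\{u_1,u_2\}$ contains $T-\{u_1,u_2\}$, which is connected (removing leaves from a tree keeps it connected), and we are done. So the obstacle is the case where every spanning tree has the property that all its leaves are pairwise adjacent in $G$. I would first observe that if $G$ has a spanning tree with at least three leaves, then among any three leaves of $T$ some pair is non-adjacent unless those three leaves form a triangle; one then argues that we may choose $T$ (e.g., a DFS tree, or by a local modification) so as to avoid all leaves being mutually adjacent, using that $G\not\cong C_n$ and $G\neq K_n$. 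The cleanest route is: take any spanning tree; if it is a Hamiltonian path $v_1v_2\cdots v_n$, then its two leaves are $v_1,v_n$; if $v_1v_n\notin E$ take $u_1=v_1,u_2=v_n$; if $v_1v_n\in E$ then $G$ contains the Hamiltonian cycle $v_1\cdots v_nv_1$, and since $G\not\cong C_n$ there is a chord $v_iv_j$, and one can then exhibit a different spanning tree (rooted so as to create two non-adjacent leaves) — here one uses $n\ge 3$ to guarantee enough room. If instead the chosen spanning tree has $\ge 3$ leaves, a short case analysis on whether all leaves form a clique (impossible to extend if $G\neq K_n$) finishes it.

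For part (ii), suppose $G$ has components $H_1,\dots,H_c$ with $2\le c\le n-1$, and fix a target $i$ with $c\le i\le n-1$. Since $i<n$, at least one component has order $\ge 2$. The idea is: first peel off single vertices to create the ``bad'' block $V_1$. Pick a component $H$ with $|V(H)|\ge 2$ and pick a vertex $w\in V(H)$ that is not a cut vertex of $H$ (a non-cut vertex exists, e.g., a leaf of a spanning tree of $H$); set $V_1 = \{w\}\cup V(H')$ for a suitably chosen other component $H'$ of $G$ — so $G[V_1]$ has exactly two components, one of which is the isolated vertex $w$ (isolated because $w$ was only adjacent, within $G$, to vertices of $H$, now removed from $V_1$). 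Then $G[V(H)\setminus\{w\}]$ is connected, and the remaining components of $G$ (other than $H$ and $H'$) are connected. We now have a partition into $V_1$ plus $(c-1)$ connected blocks (the remaining full components and $H-w$), i.e., into $c$ blocks total if $H'$ was a whole component — wait, careful: we must count. Actually put $V_1=\{w\}$ alone if $c\ge 2$ forces — no: we need $G[V_1]$ to have \emph{exactly two} components with one an isolated vertex, so $V_1$ must be $\{w\}$ together with a nonempty connected vertex set at graph-distance $\infty$ from $w$ in $G$. Take that connected set to be all of some other component $H'$; then we have blocks $V_1$, $H-w$, and the $c-2$ components other than $H,H'$: that is $c$ connected-or-almost blocks. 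To reach $i\ge c$ blocks we further split the connected pieces one vertex at a time (always removing a non-cut vertex), which is possible because the total number of vertices outside $V_1$ is $n-|V(H')|-1$ and we can refine down to all singletons if needed, reaching any block count up to $n-|V(H')|$; since $i\le n-1$ and $|V(H')|\ge 1$ this range contains $i$. One must check the edge cases: $i=c$ needs no splitting; and when every non-$V_1$ component is a single vertex already, we still attain exactly $c$ blocks, matching $i=c$.

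The main obstacle I expect is part (i): ruling out, or rather working around, the degenerate configurations where all leaves of every spanning tree are mutually adjacent. The hypothesis $G\not\cong C_n$ is exactly what is needed to break the Hamiltonian-cycle-minus-nothing case, and $G$ non-complete is what prevents the ``leaves form a clique and $G$ is that clique'' obstruction; making the spanning-tree modification explicit and verifying that the two new leaves are non-adjacent in $G$ (not merely in the new tree) is the delicate point. Part (ii) is essentially bookkeeping: the only real content is that a connected graph on $\ge 2$ vertices has a non-cut vertex, so that each ``peel'' step preserves connectivity of the block it is taken from, and the arithmetic that the reachable range of block counts is $[\,c,\ n-|V(H')|\,]\supseteq\{i\}$.
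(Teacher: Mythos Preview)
Both parts of your proposal have genuine gaps.

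\textbf{Part (ii).} Your refinement step is essentially the paper's, but your counting is wrong. You only split the \emph{connected} blocks $V_2,\ldots$ and conclude that the attainable block counts are $[c,\,n-|V(H')|]$, then assert this contains every $i\le n-1$ ``since $|V(H')|\ge 1$''. That inference requires $|V(H')|=1$. If every component of $G$ has at least two vertices, your range stops at $n-2$ and $i=n-1$ is unreachable. Concretely, take $G=K_2\sqcup K_2$ ($n=4$, $c=2$): your $V_1=\{w\}\cup V(H')$ has three vertices and the only other block is the singleton $H-w$, so you are stuck at two blocks and never reach $i=3$. The paper's inductive step avoids this by also allowing a split of the \emph{connected part} $V_1'=V_1\setminus\{u\}$ of $V_1$: peel a non-cut vertex off $V_1'$ into a new block, so $V_1$ shrinks while keeping the form ``isolated vertex plus one connected piece''. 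With that additional move the induction reaches every $i$ up to $n-1$.

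\textbf{Part (i).} Your spanning-tree route is genuinely different from the paper's connectivity argument, but as written it is only an outline, and the two residual subcases are exactly where the content lies. First, observe that ``$G-\{u_1,u_2\}$ is connected'' is \emph{equivalent} to ``some spanning tree of $G$ has $u_1,u_2$ as leaves'' (extend a spanning tree of $G-\{u_1,u_2\}$ by two pendant edges, and conversely), so the reformulation does not by itself buy anything. In the Hamiltonian-cycle-plus-chord case you say ``one can then exhibit a different spanning tree'' with two non-adjacent leaves, but you never specify it; a single chord $v_av_b$ gives you no control over which other pairs are edges of $G$, so you cannot simply declare the new leaves non-adjacent. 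In the ``$\ge 3$ leaves, all mutually adjacent'' case you claim this is ``impossible to extend if $G\ne K_n$'', but the leaf set of one particular spanning tree being a clique in $G$ does not force $G$ to be complete, and you give no tree-modification procedure. You flag this yourself as the delicate point; it is not yet a proof. The paper instead argues directly by vertex-connectivity: the $3$-connected case is immediate; the not-$2$-connected case takes non-cut vertices from distinct end-blocks; and in the $2$-connected, not $3$-connected case one picks a vertex $w$ with $d(w)\ge 3$ (this is precisely where $G\ne C_n$ is used) and does a short case split on whether $d(w)=n-1$ and whether $G-w$ is $2$-connected. That argument is short and leaves nothing to fill in.
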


\begin{proof} (a). As $c=1$, $G$ is connected.
As $G$ is non-complete, the result is trivial when
$G$ is 3-connected.

If $G$ is not $2$-connected, choose vertices
$u_1$ and $u_2$ from distinct blocks $B_1$ and $B_2$ of $G$
such that both $u_1$ and $u_2$ are not cut-vertices of $G$.
Then $u_1u_2\notin E(G)$ and $G-\{u_1,u_2\}$ is connected.

Now consider the case that $G$ is 2-connected
but not $3$-connected.
Since $G$ is not $C_n$, there exists a vertex $w$ such that
$d(w)\geq 3$.
If $d(w)=n-1$, then
$G-\{u_1,u_2\}$ is connected for any two non-adjacent
vertices $u_1$ and $u_2$ in $G$.
If $G-w$ is $2$-connected and $d(w)\leq n-2$,
then $G-\{w,u\}$ is connected
for any $u\in V-N_G(w)$.
If $G-w$ is not $2$-connected,
then $G-w$ contains two non-adjacent vertices $u_1,u_2$
such that $G-\{w,u_1,u_2\}$ is connected,
implying that $G-\{u_1,u_2\}$ is connected
as  $d(w)\geq 3$.

(b).
Let $G_1, G_2,\ldots, G_c$ be the components of $G$ with
$|V(G_1)|\ge |V(G_j)|$ for all $j=1,2,\ldots,c$.
As $c\le n-1$, $|V(G_1)|\ge 2$.
Choose $u\in V(G_1)$ such that $G_1-u$ is connected. Then
$V(G_2)\cup \{u\}, V(G_1)-\{u\}, V(G_3),\ldots, V(G_c)$
is a partition of $V$ satisfying the condition in (b) for $i=c$.

Assume that (b) holds for $i=k$, where $c\le k<n-1$, 
and $V_1,V_2,\ldots,V_k$ is a partition of $V$ 
satisfying the condition in (a).
Then $G[V_1]$ has an isolated vertex $u$
and $G[V'_1]$ is connected, where $V'_1=V_1-\{u\}$.
Since $k\le n-2$, either $|V'_1|\ge 2$
or $|V_j|\ge 2$ for some $j\ge 2$.

If $|V'_1|\ge 2$,  then $V'_1$ has a partition $V'_{1,1}, V'_{1,2}$
such that both $G[V'_{1,1}]$ and $G[V'_{1,2}]$ are connected,
implying that
$V'_{1,1}\cup \{u\}, V'_{1,2}, V_2, V_3,\ldots, V_k$
is a partition of $V$ satisfying the condition in (b)
for $i=k+1$.

Similarly,
if $|V_j|\ge 2$ for some $j\ge 2$ (say $j=2$),
then $V_2$ has a partition $V_{2,1}, V_{2,2}$
such that both $G[V_{2,1}]$ and $G[V_{2,2}]$ are connected,
implying that $V_{1}, V_{2,1},V_{2,2}, V_3,\ldots, V_k$
is a partition of $V$ satisfying the condition in (b)
for $i=k+1$.
\end{proof}

For any graph $G=(V,E)$ of order $n$, write
\begin{align}\relabel{new-coe}
(-1)^n \left[(x-n+1)\sum_{u\in V(G)}P(G-u, x)-nP(G, x)\right]=\sum_{i=1}^{n}(-1)^i d_ix^i.
\end{align}
By comparing coefficients, it can be shown that
\begin{align}\relabel{ci-exp}
d_i=\sum_{u\in V(G)}\left
[a_{i-1}(G-u)+(n-1)a_i(G-u)\right ]-na_{i}(G),
\quad \forall i=1, 2, \ldots, n.
\end{align}
It is obvious that when $G$ is the
complete graph $K_n$, the left-hand side of
(\ref{new-coe}) vanishes and thus
$d_i=0$ for all $i=1,2,\ldots,n$.
Now we consider the case that $G$ is not complete.

\begin{prop}\relabel{pos-d}
Let $G=(V,E)$ be a non-complete graph of order $n$
and component number $c$.
Then, for any $i=1,2,\ldots,n$, $d_i\ge 0$ 
and equality holds
if and only if one of the following cases happens:
\begin{enumerate}
\renewcommand{\theenumi}{\rm (\alph{enumi})}
\item $i=n$;
\item $1\le i\le c-2$;
\item $i=c-1$ and $G$ does not have isolated vertices;
\item $i=c=1$ and $G$ is  $C_n$.
\end{enumerate}
\end{prop}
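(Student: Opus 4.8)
The plan is to analyze the formula
\[
d_i=\sum_{u\in V}\bigl[a_{i-1}(G-u)+(n-1)a_i(G-u)\bigr]-na_i(G)
\]
from (\ref{ci-exp}) using the combinatorial interpretation in Theorem~\ref{interpre2}, specifically the symmetrized version (\ref{interpre-n}): $na_i(G)=\sum_{v\in V}\sum_{(V_1,\ldots,V_i)\in\mathcal{OP}_{i,v}(V)}\alpha(G[V_1],v)\prod_{j=2}^i\alpha(G[V_j],m_j)$. The idea is to set up a weight-preserving injection from the structures counted by $na_i(G)$ into the structures counted by $\sum_{u\in V}[a_{i-1}(G-u)+(n-1)a_i(G-u)]$, so that $d_i\ge 0$ is witnessed combinatorially; then $d_i=0$ precisely when the injection is also surjective, and one identifies when that happens. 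Concretely, given a pair $(v,(V_1,\ldots,V_i))$ counted on the $na_i$ side together with an acyclic orientation of $G[V_1]$ with unique source $v$ and acyclic orientations of each $G[V_j]$ with unique source $m_j$: the term $\sum_u (n-1)a_i(G-u)$ should absorb those configurations where some block $V_j$ ($j\ge 2$) is a singleton $\{u\}$ — deleting that $u$ leaves an ordered partition of $V-u$ into $i-1$ connected blocks plus a choice of the "tag'' $u$ ranging over $n-1$ possibilities matching the factor $(n-1)$ — while $\sum_u a_{i-1}(G-u)$ should absorb the configurations where the block $V_1$ containing $v$ has $\ge 2$ vertices, by deleting $v$ itself: $G[V_1]-v$ may now be disconnected, but by Theorem~\ref{source} (applied to the acyclic orientation with source $v$) removing the unique source from an acyclically-oriented connected digraph still leaves each component acyclically oriented with a well-defined source, and the resulting partition of $V-v$ has $i-1$ or more connected pieces which re-fit into an $a_{i-1}(G-v)$-type count.

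The key steps, in order, would be: (1) expand $d_i$ via (\ref{ci-exp}) and rewrite every term using (\ref{interpre-n}) and Theorem~\ref{interpre2}, so that all three summands become sums over ordered partitions weighted by products of source-counts $\alpha(\cdot,\cdot)$; (2) construct the injection $\Phi$ described above, splitting the domain according to whether $v$'s block is a singleton, and carefully track how the source/minimum-element bookkeeping in the definition of $\mathcal{OP}_{i,v}$ transforms under vertex deletion — this is where one must check that $m_j$-conditions survive and that nothing is double-counted; (3) verify $\Phi$ is injective by exhibiting its partial inverse (reinserting the deleted/tagged vertex either as a new singleton block in the correct position, or back into $V_1$ as the source); (4) characterize the complement of the image of $\Phi$: an $a_i(G)$-structure is \emph{not} hit exactly when $V_1=\{v\}$ is forced to be a singleton for the chosen base vertex and simultaneously no block is deletable as a singleton tag — and then invoke Lemma~\ref{le5-1} to show that such unreachable structures exist precisely outside the four listed exceptional cases; (5) conversely check directly in cases (i)–(iv) that the image exhausts everything, forcing $d_i=0$. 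For case (i), $i=n$ forces every block to be a singleton and $a_n(G)=1$, $a_{n-1}(G-u)=|E(G-u)|$ etc., a short direct computation. For (ii) $1\le i\le c-2$, both $a_i(G)$ and $a_i(G-u)$, $a_{i-1}(G-u)$ vanish since a graph with $c$ components has $a_j=0$ for $j<c$; similarly (iii) is the boundary case $i=c-1$ where $a_{c-1}(G-u)\ne 0$ only if deleting $u$ drops the component count, i.e.\ $u$ is isolated, so absence of isolated vertices kills the positive contribution. Case (iv) $G=C_n$ is the one non-complete connected graph where Lemma~\ref{le5-1}(i) fails for $i=n-1$, so one checks $d_{n-1}=0$ by hand using $P(C_n,x)$.

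The main obstacle I expect is step (2)–(4): getting the injection $\Phi$ to be genuinely well-defined and bijective onto its image requires delicate handling of the ordered-partition conventions in $\mathcal{OP}_{i,v}(V)$ — in particular, when we delete $v$ from $V_1$ and $G[V_1]-v$ splits into several components, we must re-order those components together with $V_2,\ldots,V_i$ according to their minimum elements to land in a valid $\mathcal{OP}_{i-1,v'}(V-v)$ for a suitable new base point $v'$, and simultaneously the old block $V_1$'s minimality status and the identity of each $m_j$ must be reconciled. Equivalently, one may prefer to avoid an explicit bijection and instead prove the inequality $d_i\ge 0$ by a clean generating-function / convolution identity: expand $\sum_u P(G-u,x)$ and $P(G,x)$ both as sums over $\mathcal{P}_i(V)$-partitions via Theorem~\ref{interpre1}, and show termwise (partition by partition, after fixing which vertex was deleted) that the coefficient is nonnegative; the extremal analysis then reduces to: for a fixed partition $\{V_1,\ldots,V_i\}$ the local contribution to $d_i$ is $\sum_{j}\widetilde\tau(G[V_j])\cdot(\text{something})\ge 0$ with equality iff no $V_j$ can be enlarged by a vertex or split, which is exactly the combinatorial content of Lemma~\ref{le5-1}. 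Whichever route, the crux is converting Lemma~\ref{le5-1} — a pure existence statement about connectivity-preserving partitions — into the statement that the nonnegative quantity $d_i$ is strictly positive outside cases (i)–(iv), and I would organize the write-up around that implication.
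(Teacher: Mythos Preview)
Your overall architecture is right --- rewrite $na_i(G)$ via (\ref{interpre-n}), split the sum, and compare termwise with $\sum_u a_{i-1}(G-u)$ and $(n-1)\sum_u a_i(G-u)$, invoking Lemma~\ref{le5-1} for strictness --- but the injection you describe is set up backwards and would not work as stated.

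First, the block-counts do not match your targets. If $V_1=\{v\}$ and you delete $v$, the remaining blocks $V_2,\ldots,V_i$ form an $(i-1)$-block partition of $V-v$, so this feeds $\sum_v a_{i-1}(G-v)$. Conversely, if $|V_1|\ge 2$ and you delete a vertex of $V_1$, you get $i$ blocks of the smaller vertex set, feeding $\sum_u a_i(G-u)$. You have these two destinations swapped. (Deleting a singleton $V_j$ with $j\ge 2$, as you propose for the $(n-1)a_i$ term, yields $i-1$ blocks, not $i$.)

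Second, and more seriously, in the $|V_1|\ge 2$ branch you propose to delete the \emph{source} $v$. That destroys the unique-source property: every out-neighbour of $v$ that had no other in-edge becomes a new source, and $G[V_1]-v$ may split, so you do not land in anything counted by $\alpha(\cdot,\cdot)$ for a connected block. The paper's move is the opposite one: pick a \emph{sink} $s\in V_1\setminus\{v\}$ of the given acyclic orientation (one exists since $|V_1|\ge 2$) and delete $s$. Because a sink has no out-edges, removing it leaves every other vertex's in-degree unchanged, so $v$ remains the unique source of the induced orientation on $G[V_1\setminus\{s\}]$; and $G[V_1\setminus\{s\}]$ is automatically connected whenever the original orientation had $s$ as a sink and $v$ as unique source. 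This gives the comparison $\alpha(G[V_1],v)\le \sum_{s\in V_1\setminus\{v\}}\alpha(G[V_1],v,s)$ and then $\alpha(G[V_1],v,s)\le \alpha(G[V_1\setminus\{s\}],v)$, which is exactly what produces the factor $(n-1)\sum_s a_i(G-s)$. Strictness then has two sources: some orientation of $G[V_1]$ has at least two sinks (so the first inequality is strict), or some connected $G[V_1\setminus\{s\}]$ arises from a disconnected $G[V_1]$ with $s$ isolated (so the second is strict). Lemma~\ref{le5-1} supplies exactly these witnesses.

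Two small slips to fix as well: for a graph $H$ of order $n-1$ one has $a_{n-1}(H)=1$, not $|E(H)|$; and case~(iv) concerns $d_1$ for $C_n$, not $d_{n-1}$.
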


\begin{proof}
We first
show that $d_i=0$ in any one of the four cases above.

By (\ref{ci-exp}),
$d_n=\sum_{u\in V}\left[1+(n-1)\cdot 0\right]-n\cdot1=0$.

It is known that for $1\le i\le n$,
$a_i(G)=0$ if and only if $i<c$ (see~\cite{DKT2005,Read1968,RT1988}).
Similarly, $a_i(G-u)=0$ for all $i$ with $1\le i<c-1$
and all $u\in V$,
and $a_{c-1}(G-u)=0$ if $u$ is not an isolated
vertex of $G$.
By (\ref{ci-exp}), $d_i=0$ for all $i$ with
$1\le i\leq c-2$,
and $d_{c-1}=0$ when $G$ does not have isolated vertices.

If $G$ is $C_n$,
then $a_1(G)=n-1$, $a_0(G-u)=0$ and $a_1(G-u)=1$ for each
$u\in V$, implying that $d_1=0$ by (\ref{ci-exp}).

In the following, we will show that
$d_i>0$ when $i$ does not belong to any one
of the four cases.

If $G$ has isolated vertices, then
$a_{c-1}(G-u)>0$ for any isolated vertex $u$ of $G$ and
\begin{align}\label{isolated}
\sum_{u\in V}a_{c-1}(G-u)=
\sum_{u\in V\atop
u \text{ isolated}}a_{c-1}(G-u)>0.
\end{align}
As $a_{c-1}(G)=0$, by (\ref{ci-exp}), we have $d_{c-1}>0$
in this case.
Now it remains to show that
$d_i>0$ holds for all $i$ with $c\le i\le n-1$,
except when $i=c=1$ and $G$ is $C_n$.

For any $v\in V$, let
$\mathcal{OP}'_{i,v}(V)$ be the set of
ordered partitions $(V_1,\ldots,V_i)\in \mathcal{OP}_{i,v}(V)$
with $V_1=\{v\}$.
As $\alpha(G[V_1],v)=1$, for any $i$ with
$c\leq i\leq n$,
by Theorem~\ref{interpre2},
\begin{align}\relabel{G-u-i-1}
a_{i-1}(G-v)
=\sum_{(V_1,\ldots, V_i)\in \mathcal{OP}'_{i,v}(V)}
\alpha(G[V_1],v) \prod_{j=2}^{i} \alpha(G[V_j],m_j),
\end{align}
where $m_j$ is the minimum number in $V_j$ for all
$j=2,\ldots,i$.

Let $s$ and $v$ be distinct members in $V$.
For any $V_1\subseteq V-\{s\}$ with $v\in V_1$,
let $\alpha(G[V_1\cup \{s\}],v,s)$ be the
number of those acyclic orientations of $G[V_1\cup \{s\}]$
with $v$ as the unique source and $s$ as one sink.
Then $\alpha(G[V_1\cup \{s\}],v,s)\le
\alpha(G[V_1],v)$ holds,
where the inequality is strict if and only if
$G[V_1]$ is connected but
$G[V_1\cup \{s\}]$ is not.
Observe that
\begin{align}
a_i(G-s)
&=\sum_{(V_1,\ldots,V_i)\in \mathcal{OP}_{i,v}(V-\{s\})}
\alpha(G[V_1],v) \prod_{j=2}^{i} \alpha(G[V_j],m_j)
\nonumber \\
&\ge \sum_{(V_1,\ldots,V_i)\in \mathcal{OP}_{i,v}(V-\{s\})}
\alpha(G[V_1\cup \{s\}],v,s) \prod_{j=2}^{i} \alpha(G[V_j],m_j)
\relabel{G-u-i-0} \\
&=\sum_{(V_1',\ldots,V_i')\in \mathcal{OP}_{i,v,s}(V)}
\alpha(G[V_1'],v,s) \prod_{j=2}^{i} \alpha(G[V_j'],m_j),
\relabel{G-u-i}
\end{align}
where $\mathcal{OP}_{i,v,s}(V)$ is the set of ordered partitions
$(V_1',\ldots,V_i')\in \mathcal{OP}_{i,v}(V)$ with
$s,v\in V_1'$.
By the explanation above,
inequality (\ref{G-u-i-0}) is strict
whenever $V-\{s\}$ has a partition $V_1,V_2,\ldots,V_i$
with $v\in V_1$
such that each $G[V_j]$ is connected
for all $j=1,2,\ldots,i$ but
$G[V_1\cup \{s\}]$ is not connected.

By (\ref{interpre-n}),  we have
\begin{eqnarray}
n a_i(G)
&=&\sum_{v\in V}
\sum_{(V_1,\ldots,V_i)\in \mathcal{OP}_{i,v}(V)}
\alpha(G[V_1],v) \prod_{j=2}^{i} \alpha(G[V_j],m_j)\nonumber \\
&=&\sum_{v\in V}
\sum_{(V_1,\ldots,V_i)\in \mathcal{OP}'_{i,v}(V)}
\alpha(G[V_1],v) \prod_{j=2}^{i} \alpha(G[V_j],m_j)
\nonumber \\
& &+\sum_{v\in V}
\sum_{(V_1,\ldots,V_i)\in
\mathcal{OP}_{i,v}(V)-\mathcal{OP}'_{i,v}(V)}
\alpha(G[V_1],v) \prod_{j=2}^{i} \alpha(G[V_j],m_j).
\relabel{ci-po-1}
\end{eqnarray}
By (\ref{G-u-i-1}),
\begin{eqnarray}
\sum_{v\in V}
\sum_{(V_1,\ldots,V_i)\in \mathcal{OP}'_{i,v}(V)}
\alpha(G[V_1],v) \prod_{j=2}^{i} \alpha(G[V_j],m_j)
=
\sum_{v\in V} a_{i-1}(G-v)
\relabel{ci-po-2},
\end{eqnarray}
and by (\ref{G-u-i}),
\begin{eqnarray}
& &\sum_{v\in V}
\sum_{(V_1,\ldots,V_i)\in \mathcal{OP}_{i,v}(V)-\mathcal{OP}'_{i,v}(V)}
\alpha(G[V_1],v) \prod_{j=2}^{i}
\alpha(G[V_j],m_j)\nonumber \\
&\le &
\sum_{v\in V}\sum_{s\in V-\{v\}}
\sum_{(V_1,\ldots,V_i)\in \mathcal{OP}_{i,v,s}(V)}
\alpha(G[V_1],v,s) \prod_{j=2}^{i} \alpha(G[V_j],m_j)
\relabel{ci-po-0} \\
&\leq &
\sum_{v\in V}\sum_{s\in V-\{v\}}
a_i(G-s)\relabel{ci-po-00} \\
&=&
(n-1)\sum_{v\in V}a_i(G-v),
\relabel{ci-po}
\end{eqnarray}
where inequality (\ref{ci-po-0}) is strict
if there exists $(V_1,\ldots,V_i)\in \mathcal{OP}_{i,v}(V)$
for some $v\in V$
such that $G[V_j]$ is connected for all $j=1,\ldots,i$
and $G[V_1]$ has acyclic orientations
with $v$ as the unique source
but with at least two sinks,
and by (\ref{G-u-i-0}) and (\ref{G-u-i}),
inequality (\ref{ci-po-00})
is strict if $V$ can be partitioned into $V_1,\ldots, V_i$
such that $G[V_j]$ is connected for all $j=2,\ldots,i$
but $G[V_1]$ has exactly two components,
one of which is an isolated vertex in $G[V_1]$.

As $G$ is not complete, by Lemma~\ref{le5-1}
and the above explanation,
the inequality of (\ref{ci-po})
is strict for all $i$ with $c\le i\le n-1$,
except when $i=c=1$ and $G$ is $C_n$.
Then, by (\ref{ci-po-1}), (\ref{ci-po-2}) and (\ref{ci-po}),
we conclude that
\begin{align}
d_i=\sum_{v\in V}\left [a_{i-1}(G-u)+(n-1)a_i(G-u)\right ]
-na_i(G)>0,\quad \forall c\le i\le n-1,
\end{align}
except that $i=c=1$ and $G$ is $C_n$.
Hence the proof is complete.
\end{proof}

Now everything is ready for proving Theorems~\ref{average-th} and \ref{compare-K}.

\medskip

\noindent
{\it Proof of Theorem~\ref{average-th}}:
Let $G$ be a non-complete graph of order $n$.
Recall (\ref{new-coe}) that
\begin{align}\label{proof-th3}
(-1)^n \left[(x-n+1)\sum_{u\in V(G)}P(G-u, x)-nP(G, x)\right]=\sum_{i=1}^{n}(-1)^i d_ix^i.
\end{align}
By Proposition \ref{pos-d}, we know that $d_i\geq 0$ for
all $i$ with $1\leq i\leq n$ and $d_{n-1}>0$.
Thus
$\sum_{i=1}^{n}(-1)^i d_ix^i>0$ holds
for all $x<0$, which completes the proof of
Theorem~\ref{average-th}.
\qed

\medskip

\begin{prop}\relabel{pro5-3}
For any non-complete graph $G$, $\xi(G,x)>0$ holds
for all $x<0$.
\end{prop}

\proof
We will prove this result by induction
on the order $n$ of $G$.
When $n=2$, the empty graph $N_2$ of order $2$
is the only non-complete graph of order $2$.
As $P(N_2,x)=x^2$, by (\ref{xi}),
we have
\begin{align}\label{proof-pro5}
\xi(N_2,x)=(-1)^2x^2\left ( \frac 1x +\frac 1{x-1}\right )
+(-1)^32x=\frac{x}{x-1}>0
\end{align}
for all $x<0$.

Assume that this result holds for any non-complete graph
$G$ of order less than $n$, where $n\ge 3$.
Now let $G$ be any non-complete graph of order $n$.

\noindent {\bf Case 1}:
$G$ contains an isolated vertex $u$.

By the inductive assumption,
$\xi(G-u,x)\ge 0$ holds for all $x<0$,
where equality holds when $G-u$ is a complete graph.
By Lemma~\ref{ud0},  $\xi(G,x)>0$ holds for all $x<0$.

\noindent {\bf Case 2}:
$G$ has no isolated vertex.

By Theorem~\ref{average-th},
(\ref{right-2}) holds for all $x<0$.
Thus, for any $x<0$, there exists some $u\in V(G)$ such that
$(-1)^n (x-n+1)P(G-u,x)+(-1)^{n+1}P(G,x)>0$
holds. Then, by Lemma~\ref{rec2} and by the inductive assumption,
 $\xi(G,x)>0$ holds for any $x<0$.

Hence the result holds.
\endproof

\noindent
{\it Proof of Theorem~\ref{compare-K}}:
It follows directly from
Propositions~\ref{compare-K-eq} and~\ref{pro5-3}.
\qed

\medskip

\section{Remarks and problems\relabel{further}}

First we give some remarks here.

\begin{enumerate}
\renewcommand{\theenumi}{\rm (\alph{enumi})}

\item Theorem~\ref{compare-K} implies that
for any non-complete graph $G$ of order $n$,
$\frac{P(G, x)}{P(K_n, x)}$
is strictly decreasing when $x<0$.

\item Let $G$ be a non-complete graph of order $n$
and $P(G, x)=\sum\limits_{i=1}^n (-1)^{n-i}a_i x^i$.
Then $\epsilon(G)<\epsilon(K_n)$ implies that
\begin{align}\label{average0}
\frac{a_1+2a_2+\cdots+na_n}{a_1+a_2+\cdots+a_n}> 1+\frac{1}{2}+\cdots+\frac{1}{n}.
\end{align}

\item When $x=-1$, Theorem~\ref{average-th}
implies that for any graph $G$ of order $n$,
\begin{align}
(-1)^{n-1}\sum_{u\in V}P(G-u, -1)\ge (-1)^nP(G, -1),
\relabel{average1}
\end{align}
where the inequality holds if and only if $G$
is complete.
By Stanley's interpretation for $(-1)^nP(G,-1)$
in~\cite{Stanley1973},
the inequality above implies that
for any graph $G=(V,E)$,
the number of acyclic
orientations of $G$ is at most
the total number of acyclic
orientations of $G-u$ for all $u\in V$,
where the equality holds if and only if $G$ is complete.
\end{enumerate}

Now we raise some problems for further study.

It is clear that for any graph $G$ of order $n$,
\begin{align}\label{con2-ex}
\frac{d}{dx}\left (\ln[(-1)^nP(G,x)]\right )
=\frac{P'(G,x)}{P(G,x)}<0
\end{align}
holds for all $x<0$.
We surmise that this property holds for higher derivatives of
the function $\ln[(-1)^nP(G,x)]$ in the interval $(-\infty,0)$.

\begin{conjecture}\relabel{con6-1}
Let $G$ be a graph of order $n$.
Then $\frac{d^k}{dx^k}\left (\ln[(-1)^nP(G,x)]\right )<0$
holds for all $k\ge 2$ and
$x\in (-\infty,0)$.
\end{conjecture}

Observe that $\epsilon(G,x)=\frac{d}{dx}\left (\ln[(-1)^nP(G,x)]\right )$.
We believe that
Theorems~\ref{compare-Q} and~\ref{compare-K}
can be extended to higher derivatives of
the function $\ln[(-1)^nP(G,x)]$.

\begin{conjecture}\relabel{con6-2}
Let $G$ be any non-complete graph
of order $n$
and $Q$ be any chordal and proper spanning subgraph $Q$ of $G$.
Then
\begin{align}\label{con3-ex}
\frac{d^k}{dx^k}\left (\ln[(-1)^nP(Q,x)]\right )
< \frac{d^k}{dx^k}\left (\ln[(-1)^nP(G,x)]\right )
<\frac{d^k}{dx^k}\left (\ln[(-1)^nP(K_n,x)]\right )
\end{align}
holds for any integer $k\geq 2$ and all $x<0$.
\end{conjecture}

It is not difficult to show that Conjecture~\ref{con6-1} holds
for $G\cong K_n$.
Thus the second inequality of Conjecture~\ref{con6-2}
implies Conjecture~\ref{con6-1}.

It is natural to extend the second part 
of Conjecture~\ref{mainconj} (i.e., $\epsilon(G)<\epsilon(K_n)$
for any non-complete graph $G$ of order $n$) 
to the inequality  
$\epsilon(G)\le \epsilon(G')$ for any graph $G'$ 
which contains $G$ as a subgraph.
However, this inequality is not always true.
Let $G_n$ denote the graph obtained from the complete bipartite graph 
$K_{2,n}$ by adding a new edge joining 
the two vertices in the partite set of size $2$.
Lundow and Markstr\"{o}m \cite{LM2006} 
stated that 
$\epsilon(K_{2,n})>\epsilon(G_n)$ holds for all $n\ge 3$.
In spite of this, 
 we believe that for any non-complete graph $G$,
we can add a new edge to $G$ to obtain a graph $G'$ 
with the property that 
$\epsilon(G)<\epsilon(G')$, as stated below.

\begin{conjecture}\relabel{con6-4}
For any non-complete graph $G$, there exist non-adjacent vertices $u$ and $v$ 
in $G$ such that $\epsilon(G)<\epsilon(G+uv)$.
\end{conjecture}

Obviously, Conjecture~\ref{con6-4} implies  
$\epsilon(G)<\epsilon(K_n)$ for any non-complete graph $G$ 
of order $n$ (i.e., Theorem~\ref{compare-K}).
Conjecture~\ref{con6-4} is similar to but may be not 
equivalent to the following conjecture
due to Lundow and Markstr\"{o}m \cite{LM2006}.

\begin{conjecture}[\cite{LM2006}]\relabel{con6-3}
For any $2$-connected graph $G$, 
there exists an edge $e$ in $G$ such that 
$\epsilon(G-e)<\epsilon(G)$.
\end{conjecture}

\section*{Acknowledgements}

The authors would like to thank the referees for their
helpful suggestions and comments.

\vspace{0.1 cm}

(F. Dong and E. Tay) Mathematics and Mathematics Education,
National Institute of Education,
Nanyang Technological University,
Singapore.
Email (Tay): engguan.tay@nie.edu.sg.

(J. Ge) School of Mathematical Sciences,
Sichuan Normal University, Chengdu, P. R. China.
Email: mathsgejun@163.com.

(H. Gong)
Department of Mathematics,
Shaoxing University, Shaoxing,
P. R. China.
Email: helingong@126.com.

(B. Ning)
College of Computer Science, Nankai University,
Tianjin 300071, P.R. China.
Email: ningbo-maths@163.com.

(Z. Ouyang)
Department of Mathematics,
Hunan First Normal University,
Changsha, P. R. China. Email:
oymath@163.com.

\end{document}